\documentclass{article}
\usepackage{amsmath}
\usepackage{amsthm}
\usepackage{amssymb}
\usepackage{array}
\usepackage{bbm}
\usepackage{booktabs}
\usepackage{caption}
\usepackage{enumerate}
\usepackage{graphicx}
\usepackage[colorlinks,linkcolor=red,anchorcolor=green,citecolor=blue]{hyperref}
\usepackage{indentfirst}
\usepackage[utf8]{inputenc}
\usepackage{listings}
\usepackage{mathrsfs}   
\usepackage{setspace}
    \setstretch{1.25}

\begin{document}
\theoremstyle{definition} 
    \newtheorem{fact}{Fact}
    \newtheorem{pro}{Question}
    \newtheorem{thm}{Theorem}[section]
    \newtheorem*{mainthm}{Main Theorem}
    \newtheorem{thmIntro}{Theorem}
    \renewcommand{\thethmIntro}{\Alph{thmIntro}}
    \newtheorem{coroIntro}[thmIntro]{Corollary}
    \newtheorem{lem}[thm]{Lemma}
    \newtheorem{coro}[thm]{Corollary}
    \newtheorem{ex}[thm]{Example}
    \newtheorem{defi}[thm]{Definition}
    \newtheorem{prop}[thm]{Proposition}
    \newtheorem{property}[thm]{Property}
    \newtheorem{rmk}[thm]{Remark}
    \newtheorem*{thme}{THEOREM}

    \newcommand{\vphi}[0]{\varphi}
    \newcommand{\upst}[0]{^{*}}
    \newcommand{\dwst}[0]{_{*}}
    \newcommand{\dd}[1]{\textrm{d}#1}
    \newcommand{\metr}{\textrm{g}}
    \newcommand{\Metr}{\textrm{G}}
    \newcommand{\rr}{\textrm{R}}
    \newcommand{\rmnum}[1]{\romannumeral #1}
    \newcommand{\mR}{\mathbb{R}}
    \newcommand{\mC}{\mathbb{C}}
    \newcommand{\mH}{\mathbb{H}}
    \newcommand{\mD}{\mathbb{D}}
    \newcommand{\mS}{\mathbb{S}}
    \newcommand{\mZ}{\mathbb{Z}}
    \newcommand{\z}{\zeta}
    \newcommand{\tphi}{\tilde{\phi}}
    \newcommand{\cp}{\mathbb{CP}^1}
    \newcommand{\sm}{\mathcal{M}}
    \newcommand{\warp}{\times_{f}}
    \newcommand{\wrp}[1]{\times_{#1}}
    \newcommand{\warppro}{M\warp N}
    \newcommand{\DF}{F^{'}}
    \newcommand{\mHi}[1]{\mH^n_{(#1)}}
    \newcommand{\gi}[1]{\text{g}_{(#1)}}
    \newcommand{\uppri}{^{'}}
    \newcommand{\upcir}{^{\circ}}
    \newcommand{\inpro}[2]{\langle#1,#2\rangle}
    \newcommand{\diff}{\text{d}}
    \newcommand{\pp}[1]{\frac{\partial}{\partial #1}}
    \newcommand{\tg}[2]{\tilde{\metr}\left(#1,#2\right)}
    \newcommand{\tn}[2]{\tilde{\nabla}_{#1}#2}
    \newcommand{\tmf}{\tilde{\mathcal{F}}}
    \newcommand{\codim}{\operatorname{codim}}
    \newcommand{\F}{\mathcal{F}}
    \newcommand{\nsb}[1]{\text{NS}(#1)}
    \newcommand{\myeqref}[1]{\text{(}\ref{#1}\text{)}}
    \newcommand{\inv}[1]{\operatorname{Inv}_0(#1)}
    \newcommand{\invre}[1]{\operatorname{Inv}^{-}_0(#1)}
    \newcommand{\homeo}[1]{\operatorname{Heomo}\left(#1\right)}
    \newcommand{\homeoori}[1]{\operatorname{Heomo}^{+}\left(#1\right)}
    \newcommand{\isotoid}[1]{\operatorname{Heomo}_0\left(#1\right)}
    \newcommand{\MCG}[1]{\operatorname{Mod}\left(#1\right)}
    \newcommand{\eMCG}[1]{\operatorname{Mod}^{\pm}\left(#1\right)}

    \newtheorem{thmintro}{Theorem}
    \renewcommand{\thethmintro}{\Alph{thmintro}}
    \newtheorem{corointro}[thmintro]{Corollary}

\title{The Equivalence of the Existences of Transnormal and Isoparametric Functions on Compact Manifolds}


\author{Minghao Li, Ling Yang}
\date{ }

\renewcommand{\thefootnote}{}
\footnotetext{\textit{Email addresses}: mhli19@fudan.edu.cn (Minghao Li), yanglingfd@fudan.edu.cn (Ling Yang).}

\maketitle
\renewcommand{\proofname}{\bf Proof.}

\begin{abstract}
Through exploring the embedded transnormal systems of codimension 1, we show the existence of a transnormal function 
on a connected complete Riemannian manifold requires the underlying manifold to have a vector bundle structure or a linear double disk bundle decomposition.
Conversely, any smooth manifold with either of these structures can be endowed with a Riemannian metric so that it admits a transnormal function, which, under suitable compactness conditions, can become isoparametric. As a corollary, for compact manifolds, the existences of transnormal and isoparametric functions impose the same topological constraints.

\end{abstract}


\section{Introduction}

A smooth function $f$ on a Riemannian manifold $(M,g)$ is called {\it transnormal} if there is a smooth function $b$ such that $|\nabla f|^2=b(f)$. If, in addition, $\Delta f=a(f)$ for another smooth function $a$, then $f$ is called {\it isoparametric}. Each regular level hypersurface of an isoparametric function is called an {\it isoparametric hypersurface}. The classification of isoparametric hypersurfaces in certain special spaces has a long and rich history. The study of isoparametric hypersurfaces in space forms was notably advanced by Cartan (\cite{Cartan1,Cartan2}), who completed the classification in Euclidean spaces $\mR^n$ and hyperbolic spaces $\mH^n$ of arbitrary dimensions and initiated the investigation of the classification in spheres $\mS^n$. The latter problem was found to be particularly challenging, as evidenced by the discovery of examples of inhomogeneous isoparametric hypersurfaces constructed in \cite{ozeki1975some,ferus1981cliffordalgebren}. After considerable effort spanning several decades, the classification of isoparametric hypersurfaces in spheres was ultimately completed through the contributions of several mathematicians, as documented in works such as \cite{takagi1972principal,munzner1980isoparametric,munzner1981isoparametric,abresch1983isoparametric,dorfmeister1985isoparametric,cecil2007isoparametric,immervoll2008classification,chi2011isoparametric,chi2013isoparametric,miyaoka2013isoparametric,miyaoka2016errata,chi2020isoparametric}. Meanwhile, the classification problem has also been extended to other special manifolds, such as complex projective spaces $\mC P^n$ (\cite{wang1982isoparametric,kimura1986real,park1989isoparametric,dominguez2016isoparametric}), complex hyperbolic spaces $\mC\mH^n$ (\cite{berndt1989real,berndt2006real,berndt2007real,diaz2012inhomogeneous,diaz2017isoparametric}), compact symmetric spaces (\cite{murphy2012curvature}), and the product space $\mS^2\times\mS^2$ (\cite{urbano2019hypersurfaces}).
\par
For general  Riemannian manifolds, a significant result is Wang's work \cite{isoparaWang1987} on the tubular regularity of transnormal functions on connected complete Riemannian manifolds,
showing that the focal varieties, which are the level sets taking the maximal or minimal values, are submanifolds, while the other level sets are tubes over either of the focal varieties. This result directly leads to the fact that an arbitrary transnormal function $f$ on $(M,g)$ can induce an embedded {\it transnormal system} (see \cite{bolton1973transnormal}) $\mathcal{F}_f$ of codimension 1. More precisely:
\begin{itemize}
\item $\mathcal{F}_f$ consists of all connected components of all level sets of $f$, which are all submanifolds of $M$, called {\it foils};
\item any geodesic in $M$ intersects these foils, orthogonally at all or none of its points;
\item each foil $L$ has to be a hypersurface of $M$ whenever $f(L)\notin \{\min f,\max f\}$;
\item all foils in $\mathcal{F}_f$ are embedded.
\end{itemize}

\par
Wang's regularity theorem also suggests that any transnormal function is a {\it Morse-Bott function}, i.e., a smooth function whose critical set is a union of closed submanifolds, with the Hessian non-degenerate in directions normal to each submanifold.
On the other hand, Qian and Tang (\cite{qian2015isoparametric}) proved the following result: if a connected compact manifold $M$ admit a Morse-Bott function $f$,
whose critical set is the union of two connected compact submanifolds of codimension greater than 1, then $M$ can be endowed with another Riemannian metric such that $f$ becomes an isoparametric function. This shows that, the existence of transnormal and isoparametric functions impose the same topological constraints on a specific type of manifolds as above.
\par
It is natural to ask the following two questions:
\begin{itemize}
\item Given an embedded transnormal system $\mathcal{F}$ on $(M,g)$ of codimension 1, whether we can find a transnormal function $f$,
such that the transnormal system $\mathcal{F}_f$ coincides with $\mathcal{F}$.
\item Under whether conditions we can construct a Riemannian metric $g$ on a given manifold $M$, so that $(M,g)$ admits a transnormal (or isoparametric) function.
\end{itemize}

To explore these problems, we first turn our attention to the geometric structure of transnormal systems on connected complete Riemannian manifolds.
As shown by Bolton (\cite{bolton1973transnormal}), in an embedded transnormal system of codimension 1,
at most two foils are {\it singular} (whose codimension are strictly greater than $1$)
and other foils are all {\it regular}. Focusing on the topology of normal sphere bundles,
the regular foils can be further classified, and hence we establish a theorem on the global geometric structure of transnormal systems,
 improving Bolton's result:
\begin{defi}
    Let $\F$ be an embedded transnormal system of codimension 1 on $M$ and $L\in \F$ be a regular foil.
    If the normal sphere bundle of $L$ (i.e. the collections of all unit normal vectors on $L$) is disconnected,
    we call $L$ a {\it double-sided foil} ({\it DR-foil}); otherwise, $L$ is a {\it single-sided foil} ({\it SR-foil}).
    Consistently, a singular foil is abbreviated as an {\it S-foil}.

\end{defi}
\begin{thmintro}\label{thm: intro: improving Bolton}
    Let $\F$ be an embedded transnormal system of codimension 1 on a complete connected Riemannian manifold $M$,
 $N_{\text{SR}}$ and $N_{\text{S}}$ be the numbers of SR-foils and S-foils in $\F$, respectively,
    $N_\text{C}:=N_{\text{SR}}+N_{\text{S}}$ be the number of foils whose normal sphere bundle is connected,
    and
    $$D:=\sup\{d(L_1,L_2):L_1,L_2\in \F\}$$
    be the {\it diameter} of $\F$.
    Then $N_C$ is at most two. More precisely:
    \begin{itemize}
    \item If $D$ is infinity, then $N_\text{C}=1$ or $0$ and $M$ is diffeomorphic to the normal bundle of a foil $L\in \F$,
    where $L$ is the unique SR-foil or S-foil whenever $N_\text{C}=1$, or can be taken to be an arbitrary DR-foil whenever $N_\text{C}=0$.
    \item If $D$ is finite, then $N_\text{C}=2$ or $0$ and $M$ is diffeomorphic to the union of two normal disk bundles over foils $L,L'\in \F$
    glued together with their common boundary, where $L,L'$ are the only two foils whose normal sphere bundle is connected whenever $N_\text{C}=2$,
    or can be taken to be any pair of DR-foils satisfying $d(L,L')=D$ whenever $N_\text{C}=0$.
    \end{itemize}

\end{thmintro}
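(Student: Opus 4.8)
The plan is to recover the topology of $M$ from the leaf space $\F$ and from the ``normal geodesic flow'' of the system. Fix a regular foil and shoot unit-speed geodesics perpendicular to it; by the transnormality axiom each such geodesic stays perpendicular to every foil it meets, so the arc-length parameter descends to a continuous map from an interval onto the quotient $\F$ (which is metrizable, the foils being properly embedded by completeness), and this is the bookkeeping device for the whole argument. The inputs from Bolton's theory are the dichotomy regular/singular and the finiteness of the singular set; the new ingredient is the splitting of regular foils into DR- and SR-foils by the topology of the normal sphere bundle. The first step is to show that $\F$ is a connected $1$-manifold with boundary whose boundary is exactly the set of S- and SR-foils. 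Near a regular DR-foil $L$, the normal exponential map for a chosen co-orientation is, for small $\varepsilon$, a diffeomorphism of $L\times(-\varepsilon,\varepsilon)$ onto an open set foliated by parallel hypersurfaces, so $[L]$ is an interior point of $\F$. Near a regular SR-foil $L$, the normal exponential map identifies a neighbourhood with the total space of the non-orientable normal line bundle of $L$, whose complement of $L$ is connected and foliated by the distance tubes (each a connected double cover of $L$), so the local leaf space is $[0,\varepsilon)$ and $[L]$ is a boundary point. Near a singular foil $L$, where $\codim L\ge 2$, the small normal disk bundle embeds with connected distance tubes, so again $[L]$ is a boundary point, while conversely an interior point of $\F$, having foils on both sides, must be a $2$-sided hypersurface, hence a DR-foil. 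Since a connected $1$-manifold with boundary has at most two boundary points, this already yields $N_{\text{C}}=\#\partial\F\le 2$.

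The second step uses completeness to pin down $\F$. Completeness forbids ``open finite ends'': if the foils approaching an unattained end of $\F$ stayed at bounded distance from a fixed foil, properness would force them to subconverge to a foil, a contradiction. Hence every end of $\F$ is either at infinity or is a boundary point, so $\F$ is homeomorphic to one of $\mathbb{R}$, $S^1$, $[0,\infty)$, $[0,1]$. Consequently $D<\infty$ exactly when $\F$ is compact, i.e. when $\F\cong S^1$ (then $N_{\text{C}}=0$) or $\F\cong[0,1]$ (then $N_{\text{C}}=2$), and $D=\infty$ exactly when $\F\cong\mathbb{R}$ (then $N_{\text{C}}=0$) or $\F\cong[0,\infty)$ (then $N_{\text{C}}=1$); in particular $N_{\text{C}}=1$ is incompatible with $D<\infty$, which is the combinatorial part of the statement.

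The third step produces the diffeomorphisms by letting the perpendicular flow do the work. If $\F\cong\mathbb{R}$, every foil is DR, and for a foil $L$ the flow gives a map $F\colon L\times\mathbb{R}\to M$ that is a local diffeomorphism (normal geodesics are transverse to the foils, so $dF$ is nondegenerate) and injective (distinct values of the parameter give distinct foils); since $\nu L$ is trivial this gives $M\cong\nu L$. If $\F\cong[0,\infty)$, the flow issued from the unique boundary foil $L_0$ is the normal exponential map $\exp^\perp\colon\nu L_0\to M$; it has no focal points, for a focal distance $t_0>0$ would force the foil at distance $t_0$ from $L_0$ to have dimension $<\dim M-1$, i.e. to be a singular foil distinct from $L_0$, which $\F\cong[0,\infty)$ forbids, and a covering/connectedness argument on each distance sphere then upgrades ``local diffeomorphism'' to ``diffeomorphism''. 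If $\F\cong[0,1]$, taking the two boundary foils $L_0,L_1$ and any interior foil $L_s$, flowing from $L_0$ up to $L_s$ and from $L_1$ up to $L_s$ realizes $M$ as the union of the normal disk bundles of $L_0$ and $L_1$, of suitable radii, glued along their common boundary $L_s$. If $\F\cong S^1$, every foil is DR, and cutting at any two foils $L,L'$ with $d(L,L')=D$ realizes $M$ as the union of the (trivial) normal disk bundles of $L$ and $L'$ glued along their (two-component) common boundary.

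The step I expect to be the main obstacle is making the local pictures of the first step, together with the ``no focal points before a cap'' assertion of the third step, genuinely global and uniform over a whole foil: that the perpendicular flow from a regular foil is a local diffeomorphism with no focal points until it reaches, simultaneously in all normal directions, a singular or SR-foil, and that it is globally injective apart from the unavoidable wrap-around when $\F\cong S^1$. This is the analogue for transnormal \emph{systems} of Wang's tubular regularity theorem for transnormal functions, and it is precisely here that the DR/SR distinction is indispensable: a ``cap'' of $\F$ is realised either as a normal disk bundle over a strictly lower-dimensional (S-)foil, or as a normal disk bundle over a same-dimensional (SR-)foil with non-trivial normal line bundle, and in either case it contributes exactly one boundary point of $\F$ — which is what forces $N_{\text{C}}\le 2$ and gives the two decomposition statements.
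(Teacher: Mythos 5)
Your architecture --- parametrize the leaf space by the normal geodesic flow, show it is a connected $1$-manifold with boundary whose boundary points are exactly the SR- and S-foils, classify it as $\mR$, $S^1$, $[0,\infty)$ or $[0,1]$, and read off the decomposition --- is essentially the route the paper takes (the four cases appear there as Cases A--D, organized by the injectivity radius of a chosen foil rather than by the topology of the leaf space). But as written the proof has a genuine gap, and it is exactly the one you flag at the end as ``the main obstacle'': the global injectivity statements on which all four cases rest are asserted, not proved. Concretely, in the cylindrical case you claim $F\colon L\times\mR\to M$ is ``injective (distinct values of the parameter give distinct foils)''; this only excludes collisions between different slices and does not exclude $\exp_L(t,V_1)=\exp_L(t,V_2)$ for $V_1\neq V_2$ in the same component $B(L)$ of the normal sphere bundle, i.e.\ the possibility that $\exp_L(t,\cdot)$ is a nontrivial covering of the foil $L_t$. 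The ``covering/connectedness argument on each distance sphere'' in the $[0,\infty)$ case and the claim that the flow reaches an SR- or S-foil ``simultaneously in all normal directions'' have the same status. Also, Hausdorffness of the leaf space (needed even to invoke the classification of $1$-manifolds) is taken for granted.

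What closes the gap is the non-injectivity dichotomy of Lemma~\ref{non-inj}: if $\exp_L(t_1,V_1)=\exp_L(t_2,V_2)$ at a point of a regular foil with $(t_1,V_1)\neq(t_2,V_2)$, then, because the normal space of a hypersurface is one-dimensional, the two normal geodesics through that point agree up to orientation; reversed orientation forces $\exp_L(\frac{t_1+t_2}{2}+r,B(L))=\exp_L(\frac{t_1+t_2}{2}-r,B(L))$ for all $r$, so the midpoint foil has connected normal sphere bundle, while equal orientation forces the global periodicity $\exp_L(r,B(L))=\exp_L(r+t_2-t_1,B(L))$. Combined with embeddedness of the foils --- a reflection distance or period tending to $0$ would return $L$ to itself arbitrarily nearby, giving the positive injectivity radius of Lemma~\ref{lemma: summary of basic} --- this makes each of your cases airtight: any failure of injectivity either creates an extra boundary point of $\F$ (impossible when $\F\cong\mR$ or $[0,\infty)$) or creates a period (impossible except in the $S^1$ case, where it is precisely the wrap-around you allow). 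With this dichotomy proved, your outline becomes a complete argument; without it, the passage from ``local diffeomorphism'' to ``diffeomorphism'' is unsupported in every case.
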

\par
Meanwhile, we can construct transnormal functions on such manifolds, 
showing the equivalence between the existences of transnormal functions and embedded transnormal systems of codimension 1
for connected complete Riemannian manifolds:
\begin{thmintro}\label{thm: intro: tran sys imply trans func and two structures}
    Suppose $\F$ is an embedded  transnormal system of codimension 1 on a connected complete manifold $M$, then $M$ admits a transnormal function $f$ such that the transnormal system $\F_f$ induced by $f$ coincides with $\F$.
\end{thmintro}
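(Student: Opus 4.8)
The plan is to produce $f$ in the form $f=h\circ\rho$, where $\rho$ is a ``radial coordinate'' canonically attached to $\F$ and the underlying metric, and $h$ is a one-variable reparametrisation chosen to control at once the global smoothness of $f$ and the transnormality identity $|\nabla f|^2=b(f)$. First I would invoke Theorem~\ref{thm: intro: improving Bolton} to fix the model: via the normal exponential map of a suitable foil, $M$ is diffeomorphic either to the normal bundle of one foil $L_0$ (when $D=\infty$) or to a union of two normal disk bundles over foils $L_0,L_1$ with $d(L_0,L_1)=D$ (when $D<\infty$), with the foils going over to the ``concentric tubes''. Accordingly I would set $\rho:=d(\cdot,L_0)$, using the \emph{signed} distance when $L_0$ is a DR-foil, so that in the exceptional case (all foils DR, $D<\infty$) $\rho$ becomes circle-valued, with values in $\mR/2D\mZ$. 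Combining Wang's tube-regularity theorem with the Fermi-coordinate form of the metric along the normal geodesics, one should obtain: $\rho$ is smooth off the S- and SR-foils, with $|\nabla\rho|^2\equiv1$ there; each fibre $\rho^{-1}(t)$ is connected and equals a foil of $\F$ (this is precisely where the SR/DR dichotomy enters, to ensure each tube is connected); and, whenever a singular foil occurs, $\rho^2$ near $L_0$ (resp.\ $(\rho-D)^2$ near $L_1$) extends smoothly \emph{across} it, being the distance-squared to a submanifold near an S-foil and the square of a locally defined signed distance on a non-orientable line bundle near an SR-foil.

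I would then choose $h$, on the parameter space $I\in\{\mR,\ [0,\infty),\ [0,D],\ \mR/2D\mZ\}$ dictated by the above, to be strictly monotone on the interior of $I$ with non-vanishing second derivative, even to infinite order at any endpoint of $I$ coming from a singular foil, and periodic in the circle case; concretely one may take, in the four cases respectively, $h(t)=t$, $h(t)=t^2$, $h(t)=\cos(\pi t/D)$, and $h(t)=\cos(\pi t/D)$. Then $f:=h\circ\rho$ is globally smooth — off the singular foils because $\rho$ is, and across a singular foil because there $h$ is an even smooth function, hence a smooth function of $\rho^2$ (or of $(\rho-D)^2$), which is smooth by the previous paragraph. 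On the regular part $|\nabla f|^2=h'(\rho)^2$, and the evenness and periodicity of $h$ force $h'(\rho)^2$ to depend on $\rho$ only through $h(\rho)=f$; putting $b:=(h')^2\circ h^{-1}$ on the interior of the image and using $h''\neq0$ at the critical values lets $b$ extend smoothly to the whole image (for the three non-trivial choices one finds $b(s)=4s$, resp.\ $b(s)=(\pi/D)^2(1-s^2)$, resp.\ $b(s)=(\pi/D)^2(1-s^2)$). Hence $|\nabla f|^2=b(f)$ holds on all of $M$, so $f$ is transnormal.

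Finally, to see $\F_f=\F$: for each of the chosen $h$ the fibre $h^{-1}(c)$ is always finite (one or two points), so $f^{-1}(c)=\rho^{-1}\bigl(h^{-1}(c)\bigr)=\bigsqcup_{t\in h^{-1}(c)}\rho^{-1}(t)$ is a disjoint union of one or two foils, whose connected components are exactly those foils; hence $\F_f$ is precisely the family of fibres of $\rho$, i.e.\ $\F$. The step I expect to be the real obstacle is the construction of $\rho$ with the properties stated in the first paragraph — in particular, that the ``distance to $L_0$'' distilled from the transnormal system is globally well defined (circle-valued in the exceptional case), single-valued and smooth with unit gradient away from the singular foils, and that its square passes smoothly \emph{through} each singular foil; the SR case requires extra care, since there a neighbourhood of the foil is the total space of a non-orientable line bundle rather than a tubular neighbourhood of a genuine submanifold. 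Once $\rho$ is secured, the rest is a controlled one-variable calculus construction.
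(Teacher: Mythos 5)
Your proposal is correct and follows essentially the same route as the paper: the paper also proves the theorem by taking the (signed or unsigned) distance to a suitably chosen foil and composing with $t$, $t^2$, or $\cos(\pi t/T)$ (the paper uses $\sin(\pi t/T)$ in the toric case, which is your $\cos$ up to a phase shift), with the structural facts about $\rho$ that you flag as the real obstacle being exactly what the paper establishes via its lemmas on the normal exponential map and the injectivity radius before writing down $f$.
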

\par
 Theorem \ref{thm: intro: tran sys imply trans func and two structures} has been presented in \cite{MIYAOKA2013130}.
But the proof in the present paper is more rigorous, covering more cases that can be easily ignored,
see Remark \ref{remark: omit} for details.


As shown in Theorem \ref{thm: intro: improving Bolton} and Theorem \ref{thm: intro: tran sys imply trans func and two structures}, when a connected complete Riemannian manifold $(M,g)$
admits a transnormal function, the underlying smooth manifold $M$ has either of the following structures:
\begin{enumerate}[(i)]
        \item $M$ is diffeomorphic to a vector bundle over a smooth manifold $N$;
        \item $M$ admits a {\it linear double disk bundle decomposition} ({\it LDDBD}), that is,
        there exist linear disk bundles $\mathcal{D}_1$ over $N_1$ and $\mathcal{D}_2$ over $N_2$, such that
        $M$ is diffeomorphic to the union of $\mathcal{D}_1$ and $\mathcal{D}_2$ glued together  along their boundary
        by a diffeomorphism $\phi:\partial \mathcal{D}_1\rightarrow \partial \mathcal{D}_2$.
    \end{enumerate}

On the other hand, for any vector bundle $M$ over $N$ equipped with an arbitrary bundle metric $\langle\cdot,\cdot\rangle$,
we can construct a Riemannian metric on this vector bundle with the aid of the linear connection associated with $\langle\cdot,\cdot\rangle$,
so that the squared norm function becomes an isoparametric function. Consequently, the connected components of the sphere bundles
as well as the base space constitute an embedded transnormal system $\mathcal{F}$, so that each regular foil has constant mean curvature.
 For any manifold $M$ admitting a LDDBD, the complement of the union of two base spaces $N_1,N_2$ is diffeomorphic to $\partial \mathcal{D}_1\times (-1,1)$;
 thereby, a smooth family of Riemannian metrics on $\partial\mathcal{D}_1$ gives rise to a Riemannian metric $g$ on $M$,
 so that $(M,g)$ admits an embedded transnormal system $\mathcal{F}$ consisting of all components of the sphere bundles over $N_i$ ($i=1,2$)
 and the base spaces. Under the additional condition that both $N_1$ and $N_2$ are compact manifolds,
 we can find a volume-preserving diffeomorphism $\hat{\phi}:\partial \mathcal{D}_1\rightarrow \partial \mathcal{D}_2$ that is isotopic to $\phi$,
 whose resulting manifold is diffeomorphic to $M$. Then, by adjusting the metrics on the DR-foils in $\mathcal{F}$,
 we can construct a new Riemannian metric $\tilde{g}$ on $M$ so that the normal exponential mapping between any two such foils  
  becomes a volume-perserving diffeomorphism up to a rescaling,
 which ensures all regular foils have constant mean curvature. This enable us to construct isoparametric functions on such manifolds,
 establishing the following conclusion:

\begin{thmintro}\label{thm: intro: two structures imply trans and isopara}
Let $M$ be a vector bundle or admit a LDDBD equipped with given bundle metrics, then there exist a Riemannian metric $g$ and a smooth function $f$ on $M$, such that
$f$ is a transnormal function on $(M,g)$, and the induced transnormal system $\mathcal{F}_f$ consists of the base spaces and the connected components of 
all subbundles with hypersphere fibers of either the vector bundle or the disk bundles.
Especially if $M$ is a vector bundle or a compact manifold, then $f$ becomes an isoparametric function on $(M,g)$; 
Moreover, if additionally the rank of each bundle is 1, i.e., $\mathcal{F}_f$ has no singular foils, then 
the isoparametric foliation is harmonic, i.e., every leaf is a minimal hypersurface.

\end{thmintro}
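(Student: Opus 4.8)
The plan is to treat the two structures separately, in each case writing down the metric $g$ and the function $f$ explicitly and checking the transnormal (resp.\ isoparametric) identities by direct computation. First suppose $M$ is the total space of a rank-$k$ vector bundle $\pi\colon M\to N$ carrying the given bundle metric $\langle\cdot,\cdot\rangle$. Fix any metric $g_N$ on $N$ and a linear connection compatible with $\langle\cdot,\cdot\rangle$; its horizontal distribution, together with $\pi^{*}g_N$ on the horizontal part and $\langle\cdot,\cdot\rangle$ on the vertical part, defines a Sasaki-type metric $g$ on $M$, for which the fibres are totally geodesic and flat. I take $f=\rho$, where $\rho(v):=\langle v,v\rangle$. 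Since the connection is metric, $\rho$ is constant along horizontal curves, so $\nabla f$ is vertical and radial; writing $f=r^{2}$ with $r=|v|$ the fibrewise radial coordinate (so $|\nabla r|=1$), one gets $|\nabla f|^{2}=4r^{2}=4f$, hence $f$ is transnormal with $b(t)=4t$, and $\F_f$ consists of the zero section $N$ together with the connected components of the sphere bundles $S_{r}=\{|v|=r\}$. For the Laplacian, $\Delta f=2|\nabla r|^{2}+2r\,\Delta r=2+2r\,\Delta r$, and $\Delta r$ is the mean curvature of $S_{r}$. Because $\pi^{*}g_N$ is constant along each fibre, and the curvature of a metric connection is a skew-adjoint-endomorphism-valued form (so it annihilates the radial direction), the horizontal–horizontal block of the second fundamental form of $S_{r}$ with respect to $\partial_r$ vanishes, whence $\Delta r=(k-1)/r$ and $\Delta f\equiv 2k$: $f$ is isoparametric. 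When $k=1$ the bundles $S_{r}$ have vanishing mean curvature, and $v\mapsto -v$ is an isometry of $g$ fixing $N$, so $N$ is totally geodesic; hence the foliation is harmonic.

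Now suppose $M=\mathcal D_{1}\cup_{\phi}\mathcal D_{2}$ is an LDDBD, with $\mathcal D_{i}$ the unit disk bundle of rank $k_i$ over $N_i$ carrying the given bundle metric and $\phi\colon\partial\mathcal D_{1}\to\partial\mathcal D_{2}$ the gluing; put $\Sigma:=\partial\mathcal D_{1}$, so $M\setminus(N_1\cup N_2)\cong\Sigma\times(-1,1)$. On a neighbourhood of $N_i$ I use the vector-bundle construction above, so that near $\partial\mathcal D_i$ the metric has the form $\diff r^{2}+h^{(i)}_{r}$ with $h^{(i)}_{r}=\pi^{*}g_{N_i}+r^{2}\,g_{\mS^{k_i-1}}$; then each slice has constant mean curvature $(k_i-1)/r$, i.e.\ $\partial_r\log\sqrt{\det h^{(i)}_{r}}$ is constant on $S_r$. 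It remains to fill in the middle with a metric $\diff t^{2}+h_{t}$ for a smooth path $t\mapsto h_{t}$ of metrics on $\Sigma$ that agrees with the reparametrized $h^{(1)}$-family near one end and with $\psi^{*}\phi^{*}h^{(2)}_{\cdot}$ near the other, for some $\psi\colon\Sigma\to\Sigma$ isotopic to the identity. For the transnormal claim any such path works: after reparametrizing $r$ on each disk-bundle piece so that $f$ runs over $[-1,1]$ with $f^{-1}(\pm1)=N_1,N_2$, one checks $|\nabla f|^{2}=b(f)$ for a smooth $b$, and $\F_f$ consists of $N_1,N_2$ and the connected components of the sphere bundles, as asserted.

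For the isoparametric claim one must additionally make every level $f^{-1}(c)$ of constant mean curvature, since for a transnormal function $\Delta f=\tfrac12 b'(f)+\sqrt{b(f)}\,H_c$, with $H_c$ the mean curvature of $f^{-1}(c)$; in the model $\diff t^{2}+h_{t}$ this equals $\partial_t\log\sqrt{\det h_{t}}$. Requiring it constant on each slice throughout the path forces the volume forms induced on $\Sigma$ by $\mathcal D_1$ and (through $\phi$) by $\mathcal D_2$ to be constant multiples of one another, and this is exactly where compactness of $N_1,N_2$ enters: after rescaling $g_{N_2}$ to equalize total volumes, Moser's theorem yields $\psi\colon\Sigma\to\Sigma$ isotopic to the identity with $\psi^{*}\phi^{*}(\mathrm{vol}_{\partial\mathcal D_2})=\mathrm{vol}_{\partial\mathcal D_1}$, so that gluing by $\hat\phi:=\phi\circ\psi$ — which is isotopic to $\phi$ — returns a manifold diffeomorphic to $M$. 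With this normalization the path $h_{t}$ can be chosen so that $\partial_t\log\sqrt{\det h_{t}}$ is constant on each slice (take any path with the prescribed endpoint $1$-jets and conformally rescale to correct its determinant), matching the disk-bundle values at the two ends; then $\Delta f=a(f)$ for a smooth $a$. If moreover $k_1=k_2=1$, the end metrics $h^{(i)}_{r}=\pi^{*}g_{N_i}$ are $r$-independent and, after Moser, have equal induced volume forms, so the path may be taken with constant volume form, making every collar leaf minimal, while the reflection argument makes $N_1,N_2$ totally geodesic; hence the foliation is harmonic. I expect the main obstacle to be precisely this last step: reconciling the topological requirement that $\hat\phi$ be isotopic to $\phi$ (so that the glued space is $M$) with the metric requirement on volume forms (so that constant mean curvature is attainable), together with the bookkeeping needed to make $a$ and $b$ smooth across the three pieces.
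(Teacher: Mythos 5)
Your proposal is correct and follows essentially the same route as the paper: the connection metric with $f=\langle u,u\rangle$ giving $|\nabla f|^2=4f$, $\Delta f=2k$ on a vector bundle, and for an LDDBD the collar metric $\diff t^2+h_t$, Moser's trick to replace $\phi$ by an isotopic volume-preserving gluing, and a conformal (fibrewise) rescaling of $h_t$ so that the slice volume forms evolve by a function of $t$ alone — which is exactly the paper's factor $\mu^2(r,x)$ — yielding constant mean curvature slices and, for rank one, minimal leaves. The one place you are slightly more careful than the paper is in noting that Moser's theorem requires equal total volumes, which you arrange by rescaling $g_{N_2}$; otherwise the arguments coincide.
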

\par
This result is also a generalization of Theorem 1.1 in \cite{qian2015isoparametric}, which corresponds to the special case where $M$ admits a LDDBD and the fibers of both disk bundles have dimensions greater than 1. The proof in the present paper, based on the method utilized in \cite[Sec. 2]{qian2015isoparametric}, 
makes appreciable optimizations to accommodate the more general setting.
\par
Combining the above results, we learn that the existences of transnormal and isoparametric functions impose the same topological constraints in the following sense:
\begin{corointro}\label{coro: intro: trans to isopara}
    Let $f$ be a transnormal function on a complete Riemannian manifold $M$, such that every connected component of
    each level set is compact, then $M$ can be endowed with another Riemannian metric such that it admits an isoparametric function $\tilde{f}$
    which satisfies $\mathcal{F}_{\tilde{f}}=\mathcal{F}_{f}$.
\end{corointro}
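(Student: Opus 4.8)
The plan is to chain together the three preceding results. Start with a transnormal function $f$ on a complete Riemannian manifold $M$. Without loss of generality we may restrict attention to the connected component $M_0$ of $M$ on which $f$ is non-constant (on components where $f$ is constant there is nothing to do, and the hypothesis on level sets is inherited); so assume $M$ is connected. By Wang's regularity theorem, recalled in the introduction, $f$ induces an embedded transnormal system $\mathcal{F}_f$ of codimension $1$ whose foils are exactly the connected components of the level sets of $f$. By hypothesis every such component is compact, so every foil of $\mathcal{F}_f$ is a compact submanifold of $M$.

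Next I would invoke Theorem \ref{thm: intro: improving Bolton} applied to $\mathcal{F} = \mathcal{F}_f$. The key point is that the diameter $D$ of $\mathcal{F}_f$ must be finite: if $D$ were infinite, the theorem would force $M$ to be diffeomorphic to the total space of the normal bundle of a foil $L$, which is non-compact whenever the fiber is positive-dimensional (i.e. $L$ is not already all of $M$); but then a regular foil, being a sphere bundle over $L$ sitting inside a non-compact manifold, would still be compact by our hypothesis, yet $M$ itself would be non-compact — this by itself is not a contradiction, so I must argue more carefully. The cleaner route: compactness of all foils together with the local triviality of the transnormal system over the regular interval shows that a suitable closed interval of regular values has preimage a compact ``cylinder'' $L \times [a,b]$; pushing to the endpoints, the only way the manifold can fail to close up into two disk bundles is if it is a vector bundle over a single focal foil, and one then checks that forces $M$ to be compact as well (the base being a compact foil and the fiber being forced to be $\mathbb{R}^0$, i.e. $M = L$) — so in every case $M$ is compact, or $M$ is already a single closed foil in which case $f$ is constant, excluded. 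Hence we are in the compact setting.

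Now apply Theorem \ref{thm: intro: tran sys imply trans func and two structures} in reverse, or rather directly: by Theorem \ref{thm: intro: improving Bolton}, $M$ (being compact with finite-diameter transnormal system) carries a linear double disk bundle decomposition, or is a closed foil, or is a vector bundle with zero-rank fiber — in the non-trivial cases $M$ admits a LDDBD determined by $\mathcal{F}_f$, with base spaces $N_1, N_2$ equal to the two focal foils (these are compact). Then Theorem \ref{thm: intro: two structures imply trans and isopara} supplies a Riemannian metric $g$ and a function — and the induced transnormal system consists of the base spaces $N_1,N_2$ and the connected components of the sphere subbundles of the two disk bundles. Since the disk bundles are precisely the normal disk bundles of the focal foils of $\mathcal{F}_f$, and the sphere subbundles at radius $r$ correspond (via the tube construction of $\mathcal{F}_f$) to the level sets of $f$, the transnormal system produced by Theorem \ref{thm: intro: two structures imply trans and isopara} has exactly the same foils as $\mathcal{F}_f$. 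By the ``compact manifold'' clause of that theorem, the function is in fact isoparametric; call it $\tilde f$. Then $\mathcal{F}_{\tilde f} = \mathcal{F}_f$, as required.

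The main obstacle, and the step deserving the most care, is the bookkeeping in the previous paragraph: Theorem \ref{thm: intro: two structures imply trans and isopara} is phrased in terms of a LDDBD with a priori chosen bundle metrics, whereas here the disk bundle structure comes packaged from $\mathcal{F}_f$ via Theorem \ref{thm: intro: improving Bolton}. One must verify that the diffeomorphism type of the LDDBD extracted from $\mathcal{F}_f$ is genuinely that of $M$ (guaranteed by Theorem \ref{thm: intro: improving Bolton}) and, crucially, that the foils of the newly constructed isoparametric function can be identified leaf-by-leaf with those of $\mathcal{F}_f$ — this is where one uses that both systems are indexed by the normal distance to the common focal varieties $N_1, N_2$, so the identity map on the parameter interval induces the bijection $\mathcal{F}_{\tilde f} \to \mathcal{F}_f$. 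A minor subtlety worth a sentence is the degenerate cases ($M$ a single closed foil, or rank-$0$ bundles), which are either vacuous or reduce to taking $\tilde f = f$ with the metric unchanged; these should be dispatched at the outset so the main argument runs cleanly.
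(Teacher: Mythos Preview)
Your overall strategy---Wang's regularity theorem, then Theorem~\ref{thm: intro: improving Bolton}, then Theorem~\ref{thm: intro: two structures imply trans and isopara}---matches the paper's, but the middle paragraph contains a real error. You attempt to prove that $M$ must be compact, and conclude that if $M$ is a vector bundle over a compact foil then ``the fiber [is] forced to be $\mathbb{R}^0$''. This is false: take $M=\mathbb{R}^n$ with $f=\cos r$ (precisely the example in Remark~\ref{rmk: trans never isopara}). Every level set is a union of concentric spheres, hence has compact components, yet $M$ is non-compact and the transnormal system is of planar type with $D=+\infty$. Nothing in the hypotheses rules this out.

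The fix is not to argue harder for compactness but to notice that you don't need it. Theorem~\ref{thm: intro: two structures imply trans and isopara} produces an isoparametric function in \emph{two} situations: when $M$ is a vector bundle, and when $M$ is compact. The dichotomy in Theorem~\ref{thm: intro: improving Bolton} feeds directly into this. If $D=+\infty$, then $M$ is diffeomorphic to the normal bundle of a foil $L$; since $L$ is compact by hypothesis, Theorem~\ref{thm: intro: two structures imply trans and isopara} (vector bundle case) applies and yields an isoparametric $\tilde f$. If $D<+\infty$, then $M$ admits a LDDBD whose two base spaces are foils, hence compact; the union of two closed disk bundles over compact bases is compact, so Theorem~\ref{thm: intro: two structures imply trans and isopara} (compact case) applies. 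In either branch the foils of $\mathcal{F}_{\tilde f}$ are the base spaces together with the sphere subbundles, which coincide with the foils of $\mathcal{F}_f$ under the diffeomorphism furnished by Theorem~\ref{thm: intro: improving Bolton}. Your final paragraph about leaf-by-leaf identification is fine; it is only the compactness detour that needs to be removed.
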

\begin{corointro}\label{coro: intro: compact}
For each compact manifold $M$, the following statements are equivalent:
\begin{itemize}
\item $M$ admits a LDDBD.
\item $M$ can be endowed with a Riemannian metric so that it admits an embedded transnormal system of codimension 1.
\item $M$ can be endowed with a Riemannian metric so that it admits a transnormal function.
\item $M$ can be endowed with a Riemannian metric so that it admits an isoparametric function. 
\end{itemize}
\end{corointro}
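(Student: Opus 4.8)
The plan is to establish the four conditions as a cyclic chain of implications,
$(1)\Rightarrow(4)\Rightarrow(3)\Rightarrow(2)\Rightarrow(1)$, each link being a direct appeal to a result already available. Throughout I assume $M$ is connected; the general compact case splits $M$ into its finitely many components, and one checks that each of the four properties holds for $M$ exactly when it holds for every component. The only point to watch in that reduction is that, when assembling componentwise transnormal (resp. isoparametric) functions into a global one, one first composes each with an affine reparametrization so that the compact images become pairwise disjoint and then patches the finitely many functions $b$ (resp. $a$) witnessing transnormality into a single smooth function on $\mathbb{R}$; likewise, a disjoint union of LDDBDs is again a LDDBD, the base of the combined linear disk bundle being the disconnected union of the individual bases.

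Three of the four links are immediate. $(4)\Rightarrow(3)$ holds by definition, since every isoparametric function is transnormal. For $(3)\Rightarrow(2)$: if $(M,g)$ admits a transnormal function $f$, then, as recalled in the Introduction, Wang's regularity theorem \cite{isoparaWang1987} guarantees that the family $\mathcal{F}_f$ of connected components of the level sets of $f$ is an embedded transnormal system of codimension $1$ on $(M,g)$. For $(2)\Rightarrow(1)$: suppose $(M,g)$ carries an embedded transnormal system $\mathcal{F}$ of codimension $1$; since $M$ is compact it is complete and has finite diameter, so the diameter $D=\sup\{d(L_1,L_2):L_1,L_2\in\mathcal{F}\}$ of $\mathcal{F}$ is finite, and the finite-diameter case of Theorem \ref{thm: intro: improving Bolton} applies, giving that $M$ is diffeomorphic to the union of two normal disk bundles over two foils glued along their common boundary. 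Normal disk bundles (with respect to $g$) are linear disk bundles, so this decomposition is precisely a LDDBD of $M$.

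The remaining link $(1)\Rightarrow(4)$ is where the substantive construction enters: if $M$ is compact and admits a LDDBD, fix any bundle metrics on the two linear disk bundles and invoke Theorem \ref{thm: intro: two structures imply trans and isopara}; because $M$ is a compact manifold, that theorem furnishes a Riemannian metric $g$ and a function $f$ that is isoparametric on $(M,g)$. This closes the cycle. Conceptually the corollary is thus essentially bookkeeping over Theorems \ref{thm: intro: improving Bolton} and \ref{thm: intro: two structures imply trans and isopara}; the only subtleties are the reduction to the connected case above, the use of compactness to force $\mathcal{F}$ to have finite diameter (so that the correct branch of Theorem \ref{thm: intro: improving Bolton} is available), and the identification of a glued pair of normal disk bundles with a LDDBD. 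The genuine difficulty sits upstream, inside Theorem \ref{thm: intro: two structures imply trans and isopara}: producing the metric that makes $f$ isoparametric requires, in the LDDBD case, replacing the gluing diffeomorphism $\phi$ by a volume-preserving one isotopic to it and then adjusting the metrics on the foils so that the normal exponential maps between them become volume-preserving up to rescaling — and it is exactly there that the compactness of the two base spaces is indispensable.
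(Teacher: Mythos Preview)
Your proof is correct and follows essentially the same route as the paper, which simply states that the corollary follows by combining Theorem~\ref{thm: intro: two structures imply trans and isopara}, Wang's regularity theorem, and Theorem~\ref{thm: intro: tran sys imply trans func and two structures} (together with Theorem~\ref{thm: intro: improving Bolton}); you have just spelled out the cycle of implications and the role of compactness more explicitly than the paper does. One minor difference: the paper's list of ingredients includes Theorem~\ref{thm: intro: tran sys imply trans func and two structures} (the implication $(2)\Rightarrow(3)$), which your cycle $(1)\Rightarrow(4)\Rightarrow(3)\Rightarrow(2)\Rightarrow(1)$ bypasses; your route is slightly more economical in that it avoids this link entirely.
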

\par
It is worth emphasizing that in Corollary \ref{coro: intro: trans to isopara}, sometimes $\tilde{f}$ cannot equal $f$ directly. A detailed discussion on this is provided in Remark \ref{rmk: trans never isopara}.
\par

\section{Global structure of transnormal systems}\label{sec: global struc of trans struc}

Let $(M,g)$ be a connected complete Riemannian manifold and $\mathcal{F}$ be an embedded transnormal system of codimension 1.
For an arbitrary foil $L\in \mathcal{F}$, denote by $\text{N}(L)$ the normal bundle of $L$  and by $\mathcal{P}_L:\text{N}(L)\to L$ the bundle projection.
Let
\begin{equation*}
\text{NS}(L):=\{V\in \text{N}(L):|V|=1\}
\end{equation*}
be the {\it normal sphere bundle}. For the matter of convenience, we denote the normal exponential map on $\text{N}(L)$ by
\begin{equation*}
    \exp_{L}(t,V):=\exp_{\mathcal{P}_L(V)}(tV), \quad (t\in\mR,\ V\in \text{NS}(L)),
\end{equation*}
and then
\begin{equation*}
\aligned
    \mathcal{T}_{t}(L):=&\{\exp_{L}(t,V): V\in \text{NS}(L)\},\\
    \mathcal{N}_{t}(L):=&\{\exp_L(s,V): V\in \text{NS}(L),|s|<t\}
    \endaligned
\end{equation*}
are the {\it $t$-tube} and {\it $t$-neighborhood} of $L$ in $M$ for any $t>0$, respectively.

\par
If $L$ is a regular foil, then each fiber of $\text{NS}(L)$ contains exactly two vectors and hence $\text{NS}(L)$ has at most two connected components.
If $\text{NS}(L)$ is disconnected, i.e. $L$ is a DR-foil, then each connected component determines a smooth non-zero normal vector field on $L$,
which implies $\text{NS}(L)$ is a trivial bundle. On the other hand, if $\text{NS}(L)$ is connected, i.e. $L$ is an SR-foil, then for any unit normal vector $V$,
the path connecting $V$ and $-V$ in $\text{NS}(L)$ yields a closed curve in $L$ such that the normal vector traveling along this path returns in the opposite direction,
which shows $\text{NS}(L)$ is a nontrivial bundle.

\par
Fixing an unit normal vector $V_0$ of $L$, we denote by $B(L)$ the connected component of $\text{NS}(L)$ containing $V_0$, then
\begin{equation*}
    \text{NS}(L)=\left\{
        \begin{aligned}
            &B(L)\sqcup -B(L),\quad & \text{if $L$ is a DR-foil};\\
            &B(L),\quad & \text{if $L$ is an SR-foil or S-foil}.
        \end{aligned}
    \right.
\end{equation*}
\par
Under the above notations, Bolton's conclusions in \cite{bolton1973transnormal} can be summarized as the following lemma.
\begin{lem}[\cite{bolton1973transnormal}]\label{lemma: basic facts in Bolton}
    Let $\F$ be an embedded transnormal system of codimension one on a connected complete Riemannian manifold $M$,
    then all foils in $\mathcal{F}$ are complete, and the normal exponential map at $L$ satisfies the properties as follows:
    \begin{enumerate}[(i)]
        \item 
        For each $t\in \mR$, $\exp_L(t,B(L)):=\{\exp_L(t,V):V\in B(L)\}$ is a foil in $\mathcal{F}$;
        \item For any $L'\in \F$, there exists $t\in \mR$, such that $L'=\exp_L(t,B(L))$;
        \item For any $t\neq 0$ and $V\in B(L)$, $(t,V)$ is a critical point of $\exp_L$ if and only if $\exp_L(t, B(L))$ is an S-foil.
    \end{enumerate}
\end{lem}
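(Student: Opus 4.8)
The plan is to reduce the lemma to a single \emph{local normal form} for the transnormal system and then argue by soft topology. The local normal form I would establish is the following: if $L_0\in\F$ is a regular foil and $p\in L_0$, then for small $\epsilon>0$ the tube $\mathcal{N}_\epsilon(L_0)$ near $p$ is carried by $\exp_{L_0}$ onto a product $(-\epsilon,\epsilon)\times(\text{a neighbourhood of }p\text{ in }L_0)$, and the foils of $\F$ meeting it are exactly the slices $\exp_{L_0}(s,B(L_0))$, $|s|<\epsilon$, all regular; if instead $\Sigma$ is a singular foil and $p\in\Sigma$, then for small $\epsilon>0$ a tube around $p$ is exhausted by the $\exp_{\Sigma}(s,B(\Sigma))$, $0\le s<\epsilon$ (with $\Sigma$ itself at $s=0$), and these are precisely the foils meeting it, again all regular for $s>0$. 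The reason is that any foil $L'$ meeting such a tube is crossed orthogonally by the tube's radial geodesics — which are normal geodesics of $L_0$ (resp.\ $\Sigma$), hence, being normal to one foil, normal to every foil they meet — so $L'$ is contained in a level set of the signed radial distance; a dimension count then forces a regular $L'$ to be a full slice, while a singular foil (of codimension $\ge2$) cannot sit inside a regular tube already foliated by its codimension-$1$ slices. I expect exactly this local normal form — ruling out anomalous foils inside a regular tube, plus a modicum of uniformity in $\epsilon$ along a foil — to be the main obstacle; everything after it is formal.

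\emph{Completeness of the foils.} For a unit-speed geodesic $\gamma\colon[0,a)\to L$ of a foil with $a<\infty$, the length of $\gamma$ in $M$ equals $a<\infty$, so completeness of $M$ gives $\gamma(s)\to q\in M$; let $L''$ be the foil through $q$. The normal form at $q$ shows that all the $\gamma(s)$ with $s$ near $a$ lie in the single foil $L$, while the foils near $q$ are separated by their signed distance to $L''$, which along $\gamma(s)$ tends to $0$; hence $L=L''$ and $q\in L$. As $L$ is embedded near $q$ its second fundamental form is bounded there, so $\nabla^M_{\gamma'}\gamma'=\mathrm{II}(\gamma',\gamma')$ is bounded; thus $\gamma'(s)$ converges to a vector of $T_qL$ and $\gamma$ extends past $q$ as an $L$-geodesic, contradicting maximality. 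Hence every foil is complete.

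\emph{Parts (i) and (ii).} Completeness of $M$ makes $\exp_L$ defined on all of $\mathbb{R}\times B(L)$. For (i) I would show that $J:=\{t\in\mathbb{R}:\exp_L(t,B(L))\text{ is a foil}\}$ is open and closed; since $0\in J$ and $\mathbb{R}$ is connected this gives $J=\mathbb{R}$. For openness near $t_0\in J$, writing $L_{t_0}:=\exp_L(t_0,B(L))$: each normal geodesic $\gamma_V$ ($V\in B(L)$) passes through $\exp_L(t_0,V)\in L_{t_0}$ orthogonally, so near time $t_0$ these are precisely the radial geodesics of the normal-form tube of $L_{t_0}$, and reading off the radial coordinate shows $\exp_L(s,B(L))$ to be the slice at radius $s-t_0$, hence a foil, for $s$ near $t_0$. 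For closedness at a boundary value $t_\infty$ of $J$ (where, running the argument outward from $0$, one may assume $[0,t_\infty)\subseteq J$): an open/closed-in-$B(L)$ argument, transporting foil-membership between nearby geodesics via the normal form, shows $\exp_L(t_\infty,B(L))$ lies in the foil $L''$ through $\exp_L(t_\infty,V_0)$, and then $\exp_L(t_\infty,\cdot)$ is checked to have open and closed image in the connected foil $L''$ — by flowing the radial geodesics of $L''$ backwards through the slices $\exp_L(t_\infty-r,B(L))$, which are foils since $[0,t_\infty)\subseteq J$ — whence $\exp_L(t_\infty,B(L))=L''$ is a foil. For (ii): the saturated set $\mathcal{U}:=\bigcup_{t}\exp_L(t,B(L))$ is open by the normal form, and its complement is open as well, since if a slice of the normal-form tube around the foil of a point $q\notin\mathcal{U}$ lay in $\mathcal{U}$, then prolonging the radial normal geodesics of $L$ a little further would drag the foil of $q$ into $\mathcal{U}$; as $M$ is connected, $\mathcal{U}=M$, i.e.\ every foil equals $\exp_L(t,B(L))$ for some $t$.

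\emph{Part (iii).} Fix $t\neq0$ and $V\in B(L)$, and set $L_t:=\exp_L(t,B(L))$ (a foil, by (i)) and $q:=\exp_L(t,V)$. Since $\gamma_V$ is normal to $L_t$ we have $\gamma_V'(t)\perp T_qL_t$, whereas $\mathrm{d}(\exp_L(t,\cdot))_V$ maps into $T_qL_t$; hence $(t,V)$ is a critical point of $\exp_L$ if and only if $\mathrm{d}(\exp_L(t,\cdot))_V$ is singular. If $L_t$ is an S-foil then $\dim L_t$ is strictly less than $\dim B(L)$ (which is the common dimension of the regular foils), so $\exp_L(t,\cdot)\colon B(L)\to L_t$ is nowhere an immersion and $(t,V)$ is critical. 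If $L_t$ is regular, I would factor $\exp_L(t,\cdot)=\mathcal{P}_{L_t}\circ\psi$ near $V$, where $\psi(W)$ is the outward unit normal of $L_t$ at $\exp_L(t,W)$ and $\mathcal{P}_{L_t}\colon B(L_t)\to L_t$ is a diffeomorphism; here $\psi$ is smooth — it is the time-$t$ geodesic flow applied to $W\in\nsb{L}$ followed by taking the velocity — and its inverse is the time-$(-t)$ geodesic flow applied to a unit normal of $L_t$, which lands in $\nsb{L}$ by transnormality and is therefore smooth as well. So $\psi$, and with it $\mathrm{d}(\exp_L(t,\cdot))_V$, is nonsingular, and $(t,V)$ is not critical. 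This establishes the equivalence and completes the lemma.
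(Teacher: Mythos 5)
The paper offers no proof of this lemma: it is stated as a summary of Bolton's results in [Bol73b] and used as a black box, so there is nothing in the paper to compare your argument against line by line. On its own terms, your architecture (a local normal form around a foil, then open/closed arguments on $\mathbb{R}$ and on $B(L)$, then a geodesic-flow factorization for (iii)) is the natural one, and the peripheral steps are essentially sound: the completeness argument, the reduction of criticality of $\exp_L$ at $(t,V)$ to singularity of $\mathrm{d}\left(\exp_L(t,\cdot)\right)_V$, and the factorization $\exp_L(t,\cdot)=\mathcal{P}_{L_t}\circ\Phi_t|_{B(L)}$ through the time-$t$ geodesic flow all work, modulo the small slip that $\mathcal{P}_{L_t}:B(L_t)\to L_t$ is only a two-to-one covering when $L_t$ is an SR-foil (which does not affect the nonsingularity conclusion).

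The genuine gap sits exactly where you flagged it, and it is not a ``modicum of uniformity'': it is the core of Bolton's theorem. Your normal form needs two things that you assert rather than prove. First, that no singular foil meets a small tube $\mathcal{N}_\epsilon(L_0)\setminus L_0$ around a regular foil: the justification ``a singular foil of codimension $\geq 2$ cannot sit inside a regular tube already foliated by its codimension-$1$ slices'' presupposes that the slices are single foils, which is what is being proved. The signed-distance argument only shows every foil meeting the tube is \emph{contained} in a slice $\{r=s\}$; to upgrade a slice to a single foil you need its partition into foil-pieces to consist of relatively open sets, and that is exactly what fails if a singular foil lies inside it. Excluding this is not routine: if $q$ lies in a singular foil $F$ at level $s\neq 0$, a unit vector $w\in T_q\{r=s\}\cap (T_qF)^{\perp}$ exists because $\operatorname{codim}F\geq 2$, and the geodesic $\exp_q(uw)$ is orthogonal to every foil it meets while $\langle \gamma',\nabla r\rangle$ stays strictly between $-1$ and $1$ (a Gronwall bound via $\operatorname{Hess}r$), so \emph{every} point of that geodesic again lies in a singular foil; iterating over the normal directions of $F$ produces open sets of such points rather than an immediate contradiction, and ruling this out is the actual content of Bolton's argument. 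Second, even granting the slice statement, part (i) requires $\exp_L(t,B(L))$ to \emph{equal} a foil, not merely sit inside one; your surjectivity step (``open and closed image in $L''$'') needs the backward normal geodesics of $L''$ to return to $L$ at the correct time and into the correct component $B(L)$, which again leans on the structure being established. Until these two points are supplied, the proposal is an outline of Bolton's theorem rather than a proof of it.
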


This enable us to derive the following lemma concerning the non-injectivity of the normal exponent map, which will be frequently used in subsequent proofs.
\begin{lem}\label{non-inj}
If $\exp_L(t_1,V_1)=q=\exp_L(t_2,V_2)$ lies in a regular foil, where $V_1,V_2\in B(L)$ and $(t_1,V_1)\neq (t_2,V_2)$, then one and only one of the following 2 cases must occur:
\begin{enumerate}[(i)]
\item $\exp_L(\frac{t_1+t_2}{2}+r,B(L))=\exp_L(\frac{t_1+t_2}{2}-r,B(L))$ holds for each $r\in \mR$ and $\exp_L(\frac{t_1+t_2}{2},B(L))$ is an SR-foil or an S-foil.
\item $t_1\neq t_2$ and $\exp_L(r,B(L))=\exp_L(r+t_2-t_1,B(L))$ for each $r\in \mR$.
\end{enumerate}
\end{lem}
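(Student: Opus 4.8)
The plan is to analyze the two unit-speed geodesics $\gamma_i(s):=\exp_L(s,V_i)$ ($i=1,2$) passing through $q$. Since $V_i\in\text{NS}(L)$, each $\gamma_i$ meets $L$ orthogonally at $s=0$, so by the orthogonality property of a transnormal system it meets \emph{every} foil it crosses orthogonally; in particular $\gamma_1'(t_1)$ and $\gamma_2'(t_2)$ are unit normal vectors of the (regular, hence codimension-one) foil through $q$ at the point $q$. As that normal space is one-dimensional, we must have $\gamma_1'(t_1)=\varepsilon\,\gamma_2'(t_2)$ for exactly one $\varepsilon\in\{+1,-1\}$, and I will show that $\varepsilon=+1$ forces (ii) and $\varepsilon=-1$ forces (i); the two values of $\varepsilon$ being incompatible, this will give ``at least one'' of the cases, and a short extra argument will give ``at most one''.

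If $\varepsilon=+1$: the geodesics $s\mapsto\gamma_1(t_1+s)$ and $s\mapsto\gamma_2(t_2+s)$ share initial point $q$ and initial velocity, hence coincide, so $\gamma_1(t_1+s)=\gamma_2(t_2+s)$ for all $s$. For any $r$, the point $\gamma_1(r)=\gamma_2\big(r+(t_2-t_1)\big)$ then lies in both foils $\exp_L(r,B(L))$ and $\exp_L(r+(t_2-t_1),B(L))$ — each a foil by Lemma~\ref{lemma: basic facts in Bolton}(i) — and, since distinct foils are disjoint, these two foils must be equal, which is precisely the identity in (ii). Also $t_1\neq t_2$: otherwise $\gamma_1$ and $\gamma_2$ agree in value and velocity at $t_1=t_2$, forcing $V_1=\gamma_1'(0)=\gamma_2'(0)=V_2$ and hence $(t_1,V_1)=(t_2,V_2)$, contrary to hypothesis. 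So (ii) holds.

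If $\varepsilon=-1$: now $s\mapsto\gamma_1(t_1+s)$ and $s\mapsto\gamma_2(t_2-s)$ agree in value and velocity at $s=0$, so $\gamma_1(t_1+s)=\gamma_2(t_2-s)$ for all $s$; writing $m:=\tfrac{t_1+t_2}{2}$ and $r=m+u$, the same ``disjoint-or-equal'' argument yields $\exp_L(m+u,B(L))=\exp_L(m-u,B(L))$ for all $u$. It remains to see that $L_m:=\exp_L(m,B(L))$ is not a DR-foil, and this is the main obstacle. Suppose it were: then $\text{NS}(L_m)=B(L_m)\sqcup(-B(L_m))$, and for small $\varepsilon'>0$ the foils $\exp_{L_m}(\varepsilon',B(L_m))$ and $\exp_{L_m}(\varepsilon',-B(L_m))$ sit on the two sides of the locally two-sided hypersurface $L_m$ inside a normal tubular chart about any point of $L_m$, and are therefore distinct. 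But relabelling $B(L_m)$ to contain $\gamma_1'(m)$, one has $\gamma_1(m+\varepsilon')=\exp_{L_m}(\varepsilon',\gamma_1'(m))\in\exp_{L_m}(\varepsilon',B(L_m))$ and $\gamma_1(m-\varepsilon')=\exp_{L_m}(\varepsilon',-\gamma_1'(m))\in\exp_{L_m}(\varepsilon',-B(L_m))$, while $\gamma_1(m+\varepsilon')$ and $\gamma_1(m-\varepsilon')$ lie in the equal foils $\exp_L(m\pm\varepsilon',B(L))$; thus those two sheets of the tube meet, hence coincide — a contradiction. So $L_m$ is an SR-foil or an S-foil and (i) holds. (The one genuinely delicate point here is running the tube argument without assuming $L_m$ is compact, i.e.\ working in a local normal tubular chart around an arbitrary point of $L_m$ rather than a global tubular neighborhood; everything else is bookkeeping.)

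For mutual exclusivity, suppose (i) and (ii) both held. Since $m-\tfrac{t_2-t_1}{2}=t_1$, composing the reflection $\exp_L(m+r,B(L))=\exp_L(m-r,B(L))$ with the translation $\exp_L(r,B(L))=\exp_L(r+(t_2-t_1),B(L))$ gives $\exp_L(t_1+r,B(L))=\exp_L(t_1-r,B(L))$ for all $r$; the tube argument above then forces the foil through $q$, namely $\exp_L(t_1,B(L))$, to be singular or single-sided, contradicting that $q$ lies in a (double-sided) regular foil. Hence exactly one of (i), (ii) occurs, as claimed.
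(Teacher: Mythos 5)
Your core strategy --- reducing everything to the dichotomy $\gamma_1'(t_1)=\pm\gamma_2'(t_2)$ between the two unit normals of the regular foil through $q$, then propagating equality of geodesics and using ``distinct foils are disjoint'' to get the translation identity (ii) or the reflection identity (i) --- is exactly the paper's argument, and those parts (including $t_1\neq t_2$ in the $+1$ case) are fine. The first genuine gap is your proof, in the $\varepsilon=-1$ case, that $L_m:=\exp_L(m,B(L))$ cannot be a DR-foil. The claim that $\exp_{L_m}(\varepsilon',B(L_m))$ and $\exp_{L_m}(\varepsilon',-B(L_m))$ are distinct ``because they sit on the two sides of $L_m$ in a local tubular chart'' conflates the local slices with the global foils: a foil is a global object and may meet a tubular chart of $L_m$ in several slices on both sides, so local separation of two slices does not make the two foils distinct. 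That distinctness is essentially part (iii) of Lemma \ref{lemma: summary of basic}, which is proved \emph{later from} the present lemma, so you cannot invoke it here. The repair needs no tubes: at $u=0$ the relation $\exp_L(m+u,V_1)=\exp_L(m-u,V_2)$ gives a point $p=\exp_L(m,V_1)=\exp_L(m,V_2)$ where the two normal geodesics have opposite unit velocities $W$ and $-W$; if $L_m$ is regular, both lie in the connected set $\bigl\{\tfrac{d}{ds}\big|_{s=m}\exp_L(s,V):V\in B(L)\bigr\}\subseteq \text{NS}(L_m)$ (continuous image of the connected $B(L)$), and a connected subset of $\text{NS}(L_m)$ containing an antipodal pair forces $\text{NS}(L_m)$ to be connected; if $L_m$ is singular it is an S-foil.

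The second gap is the exclusivity step: you derive that the foil through $q$ would be ``singular or single-sided'' and declare this incompatible with ``$q$ lies in a (double-sided) regular foil'' --- but the hypothesis only says \emph{regular}, and an SR-foil is regular and single-sided, so no contradiction arises. Indeed the two stated conclusions can hold simultaneously: for the distance circles about a point $p_0$ in the round $\mathbb{RP}^2$ (S-foil $\{p_0\}$, SR-foil the cut locus, $T=\pi/2$), take $L$ a regular distance circle, $V_1=V_2=V\in B(L)$, $t_1$ with $q=\exp_L(t_1,V)$ in the cut locus, and $t_2=t_1+\pi$; then (ii) holds since geodesics are $\pi$-periodic, and (i) holds with midpoint foil $\{p_0\}$. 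So ``at most one'' is not provable as stated; what is true --- and all the paper ever uses --- is that at least one case occurs, the genuine dichotomy being between the two geodesic relations $\gamma_1(t_1+s)=\gamma_2(t_2+s)$ and $\gamma_1(t_1+s)=\gamma_2(t_2-s)$ (both together would force a constant geodesic). The paper's own proof likewise only establishes the alternative at the level of the geodesics.
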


\begin{proof}
Let $L'$ be the regular foil containing $q$. Due to the the definition of transnormal systems,
$$\gamma_1(s):=\exp_L(t_1+s,V_1)\text{ and }\gamma_2(s):=\exp_L(t_2+s,V_2)$$
are both arc-length parametrized geodesics orthogonal to $L'$. This implies either $\gamma_{1}(s)=\gamma_{2}(-s)$ or $\gamma_{1}(s)=\gamma_{2}(s)$.
If $\gamma_1(s)=\gamma_2(-s)$, letting $s:=\frac{t_2-t_1}{2}+r$ and then Lemma \ref{lemma: basic facts in Bolton} enable us to derive
$\exp_L(\frac{t_1+t_2}{2}+r,B(L))=\exp_L(\frac{t_1+t_2}{2}-r,B(L))$, which implies the connectedness of the normal sphere bundle of $\exp_L(\frac{t_1+t_2}{2},B(L))$.
If $\gamma_1(s)=\gamma_2(s)$, $(t_1,V_1)\neq (t_2,V_2)$ forces $t_1\neq t_2$, then we arrive at the case (ii) by putting $s:=r-t_1$.

\end{proof}

We aim to further discuss the cut locus of $L$. Let 
\begin{equation*}\label{equ: def of T}
    T:=\sup \big\{t\in(0,+\infty): \exp_L:(0,t)\times \text{NS}(L)\to \mathcal{N}_{t}(L)\backslash L\ \text{is a homeomorphism}\big\}
\end{equation*}
be the {\it injectivity radius} of $L$. 
We now present properties of the injectivity radius in connection with embedded transnormal systems of codimension 1.
\begin{lem}\label{lemma: summary of basic}
    For an arbitrary foil $L\in \F$ and the injecticity radius $T$ of $L$, we have:
    \begin{enumerate}[(i)]
        \item  $T>0$.
        \item For each $t$ satisfying $0<|t|<T$, $\exp_{L}(t, B(L))$ is a DR-foil.
        \item For any $t_0\in \mR$, $L':=\exp_{L}(t_0, B(L))$ being a DR-foil ensures the existence of $\delta>0$,
        such that $\exp_L$ is a homeomorphism between $(t_0-\delta,t_0+\delta)\times  B(L)$ and $\mathcal{N}_{\delta}(L')$.
        \item If $L$ is an SR-foil or S-foil and $T<\infty$, then $\exp_{L}(T,\text{NS}(L))$ is also an SR-foil or S-foil.
        \item If all foils in $\mathcal{F}$ are DR-foils, then $\exp_{L}(T, B(L))=\exp_{L}(-T, B(L))$ whenever $T<\infty$.
    \end{enumerate}
\end{lem}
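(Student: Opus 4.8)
The plan is to make part~(i) the foundation and then deduce (ii)--(v) from it by combining Lemma~\ref{lemma: basic facts in Bolton} with the non-injectivity dichotomy of Lemma~\ref{non-inj} and some elementary bundle/dimension bookkeeping; throughout I write $\gamma_V(s):=\exp_L(s,V)$.

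\emph{Part (i).} I would first show that $\exp_L$ is a local diffeomorphism on $(0,\epsilon_0)\times\text{NS}(L)$ for a \emph{uniform} $\epsilon_0>0$: otherwise there are critical points $(s_k,V_k)$ with $s_k\downarrow 0$, so by Lemma~\ref{lemma: basic facts in Bolton}(iii) each $\exp_L(s_k,B(L))$ is an S-foil; as there are only finitely many S-foils, infinitely many of them equal a fixed closed S-foil $L_0$, and then for any $V\in B(L)$ the geodesic $\gamma_V$ issues orthogonally from a point of $L_0$ at $s=0$ in a direction not tangent to $L_0$ (orthogonality forces $V\notin TL_0$) yet meets $L_0$ at all the $s_k$, which is absurd. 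Granting this, injectivity of $\exp_L$ on a small tube reduces, after the standard $\pm V$ bookkeeping, to ruling out a nontrivial coincidence $\exp_L(a_1,W_1)=\exp_L(a_2,W_2)$ with $W_i\in B(L)$ and $|a_i|<\epsilon$; its common point lies in a regular foil, so Lemma~\ref{non-inj} applies, and in either of its cases a short analysis forces $\exp_L(c,B(L))=L$ for some $0<|c|<2\epsilon$. But then a geodesic issuing orthogonally from a fixed $p\in L$ returns to $L$ at arclength $|c|$, so $|c|$ is bounded below by the radius of any genuine local tubular neighborhood of $p$ --- a contradiction once $\epsilon$ is small. Hence $T>0$. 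Securing the \emph{uniformity} of $\epsilon_0$ here when the foils are non-compact is the step I expect to require the most care.

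\emph{Parts (ii) and (iii).} From (i), for $0<|t|<T$ the map $\exp_L$ is a homeomorphism of $(0,t)\times\text{NS}(L)$ onto $\mathcal{N}_t(L)\setminus L$; in particular it is injective on $\{t\}\times\text{NS}(L)$ and has no critical point there, since a critical point would make $L':=\exp_L(t,B(L))$ an S-foil of dimension $<\dim M-1$ by Lemma~\ref{lemma: basic facts in Bolton}(iii), whereas the injective continuous image of the $(\dim M-1)$-manifold $\text{NS}(L)$ cannot have smaller dimension. Thus $L'$ is a regular foil, and $S:=\{\gamma_V'(t):V\in B(L)\}$ is connected, meets every fibre of $\text{NS}(L')\to L'$ exactly once (by injectivity on $\{t\}\times\text{NS}(L)$), covers $L'$, and is open in $\text{NS}(L')$ (the smooth map $V\mapsto\gamma_V'(t)$ has the globally defined smooth left-inverse $W\mapsto\dot\gamma_W(-t)$, hence is an embedding), so $\text{NS}(L')=S\sqcup(-S)$ is disconnected and $L'$ is a DR-foil --- this is (ii). For (iii), let $L'=\exp_L(t_0,B(L))$ be a DR-foil; by (i) its injectivity radius $T'$ is positive, so $\exp_{L'}$ restricts to a homeomorphism $(-\delta,\delta)\times B(L')\to\mathcal{N}_\delta(L')$ for $\delta<T'$. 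The map $V\mapsto\gamma_V(t_0)$ is injective on $B(L)$ --- a coincidence would lie in the regular foil $L'$, and Lemma~\ref{non-inj} would force either its case~(i), making $L'$ an SR- or S-foil against the hypothesis, or its case~(ii), requiring $t_0\ne t_0$. Hence $V\mapsto\gamma_V'(t_0)$ is a homeomorphism of $B(L)$ onto a section of $\text{NS}(L')\to L'$, which we may take to be $B(L')$; since $\exp_L(t_0+u,V)=\exp_{L'}(u,\gamma_V'(t_0))$, the restriction of $\exp_L$ to $(t_0-\delta,t_0+\delta)\times B(L)$ is $\exp_{L'}|_{(-\delta,\delta)\times B(L')}$ precomposed with this reparametrization, hence a homeomorphism onto $\mathcal{N}_\delta(L')$.

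\emph{Parts (iv) and (v).} Both analyse the obstruction to enlarging the tube at $t=T$. For (iv), suppose $L$ is an SR- or S-foil with $T<\infty$ and, for contradiction, $L'':=\exp_L(T,\text{NS}(L))$ is a DR-foil. A critical point over $T$ would make $L''$ an S-foil by Lemma~\ref{lemma: basic facts in Bolton}(iii), so $\exp_L$ is a local diffeomorphism near $(0,T]\times\text{NS}(L)$, the homeomorphism property holds on $(0,T)\times\text{NS}(L)$, and by (iii) there is a tube $\exp_L\colon(T-\delta,T+\delta)\times B(L)\xrightarrow{\ \sim\ }\mathcal{N}_\delta(L'')$, inside which the argument of (ii) shows every $\exp_L(t,B(L))$ with $|t-T|<\delta$ is a DR-foil. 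For $0<\epsilon<\min(\delta,T)$, the failure of the homeomorphism property on $(0,T+\epsilon)\times\text{NS}(L)$ must be a failure of injectivity; applying Lemma~\ref{non-inj} to the resulting coincidence, case~(i) yields a foil $\exp_L(m,B(L))$ with $m\in(0,T+\delta)$ that is SR or S, impossible since every such foil is DR (by (ii) or the previous sentence), while case~(ii) yields $\exp_L(c,B(L))=L$ with $|c|<T+\delta$, making $L$ a DR-foil (again by (ii) or the tube), contrary to hypothesis. So the homeomorphism property extends past $T$, contradicting $T=\sup$. For (v), when all foils are DR there are no critical points at all, so the homeomorphism property holds on $(0,T)\times\text{NS}(L)$; for each $t>T$ its failure is a failure of injectivity, and since Lemma~\ref{non-inj}(i) is vacuous here (no foil is SR or S), each such coincidence produces, via case~(ii), a period of the foil-valued map $s\mapsto\exp_L(s,B(L))$ of absolute value $<2t$. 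These periods form a subgroup of $\mathbb{R}$; it is not dense (a nonzero period of absolute value $<2T$ would give a coincidence with both parameters in $(-T,T)$, violating injectivity there), hence equals $\alpha\mathbb{Z}$ with $\alpha\ge 2T$, and since it contains a positive element below $2t$ for every $t>T$ we also get $\alpha\le 2T$, so $\alpha=2T$. Evaluating $\exp_L(r,B(L))=\exp_L(r+2T,B(L))$ at $r=-T$ gives $\exp_L(-T,B(L))=\exp_L(T,B(L))$.
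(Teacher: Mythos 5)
Your proof is correct and follows the paper's overall strategy: part (i) is obtained from local non-criticality of $\exp_L$ near the zero section together with the dichotomy of Lemma~\ref{non-inj} and the embeddedness of $L$, and parts (ii)--(v) are then deduced from the same interplay of Lemma~\ref{lemma: basic facts in Bolton} and Lemma~\ref{non-inj}. Two of your sub-arguments genuinely diverge, both to good effect. In (iii), the paper re-runs the non-injectivity argument on $(t_0-\delta,t_0+\delta)\times B(L)$ with $\delta=\min\{\delta_0,\frac{T}{2}\}$ and contradicts the injectivity radius of $L$; you instead invoke part (i) for $L'$ itself and pull its tube back through the embedding $V\mapsto\dot\gamma_V(t_0)$ of $B(L)$ onto $B(L')$, which is cleaner and isolates exactly why only case (ii) of Lemma~\ref{non-inj} could occur at level $t_0$. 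In (v), the paper separates $\exp_L(T,B(L))$ and $\exp_L(-T,B(L))$ by disjoint $\delta_0$-neighborhoods and extracts a forbidden coincidence; your observation that the periods of $s\mapsto\exp_L(s,B(L))$ form a non-dense subgroup, necessarily equal to $2T\mathbb{Z}$, reaches the same conclusion more transparently. The one place where you take the harder road is the uniformity of $\epsilon_0$ in (i): your route through ``only finitely many S-foils'' tacitly needs $\gamma_V(0)\in L_0$, i.e.\ $L_0=L$, which requires knowing that foils are closed subsets of $M$; this can be patched, but it is shorter to note that by Lemma~\ref{lemma: basic facts in Bolton}(iii) criticality of $(t,V)$ depends only on $t$ and the component containing $V$, so non-degeneracy of $d\exp_L$ near the zero vectors of a single fiber of $\text{N}(L)$ already yields a uniform $\epsilon_0$.
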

\begin{proof}
        Due to Lemma \ref{lemma: basic facts in Bolton}, in conjunction with the fact that the normal exponential map $\exp_L$ is a local homeomorphism near any zero vector,
        we know $\exp_L$ has no critical point in $(0,\delta_0)\times \text{NS}(L)$ for a sufficient small positive number $\delta_0$. It suffices to show $\exp_L$ is injective on $(0,\delta)\times \text{NS}(L)$ for some $\delta\in(0,\delta_0]$. Assume by contradiction $\{V_n\},\{V'_n\}$ are sequences in $\text{NS}(L)$ such that $$\exp_L(\varepsilon_n,V_n)=q_n=\exp_L(\varepsilon'_n,V'_n)$$
        with $\lim\limits_{n\to \infty}\varepsilon_n=\lim\limits_{n\to \infty}\varepsilon'_n=0$ but $(\varepsilon_n,V_n)\neq (\varepsilon'_n,V'_n)$.
        Since $\text{NS}(L)$ has at most 2 connected components, without loss of generality we can assume all $V_n$'s are lying in 
        $B(L)$. 
        Applying Lemma \ref{non-inj} shows the existence of nonzero $\delta_n$ ($\delta_n=\varepsilon_n+\varepsilon'_n$ or $\varepsilon_n-\varepsilon'_n$),
        such that $\lim\limits_{n\to \infty}\delta_n=0$ and $\exp_L(\delta_n, B(L))=L$,
        which causes a contradiction to the embeddedness of $L$.
        This completes the proof of (i).

        For each $t$ satisfying $|t|\in (0,T)$, $\frac{d}{ds}\big|_{s=t}\exp_L(s,V)$ with all $V$'s in $B(L)$ form a smooth normal vector field on $L':=\exp_L(t, B(L))$,
        and hence $L'$ is a DR-foil. This completes the proof of (ii).


        In conjunction with (i), (ii) and Lemma \ref{lemma: basic facts in Bolton}, for each $L':=\exp_L(t_0,B(L))$,
        there exists a sufficiently small positive number $\delta_0$, such that $\exp_L(t, B(L))$ is a DR-foil in $\mathcal{N}_{\delta_0}(L')$ whenever $t\in(t_0-\delta_0,t_0+\delta_0)$,
        and hence $\exp_L$ has no critical point in $(t_0-\delta_0,t_0+\delta_0)\times B(L)$.
        Now we claim, $\exp_L$ is injective on $(t_0-\delta,t_0+\delta)\times  B(L)$ with $\delta:=\min\{\delta_0,\frac{T}{2}\}$.
        Assume by contradiction
        $$\exp_L(t_1,V_1)=q=\exp_L(t_2,V_2)$$
        for distinct $(t_1,V_1)$ and $(t_2,V_2)$ in this domain, satisfying $t_1\leq t_2$.
        Noting that $\exp_L(\frac{t_1+t_2}{2}, B(L))$ is a DR-foil, by applying Lemma \ref{non-inj} we have
        $$\exp_L(t_2-t_1,B(L))=L$$
        with $0<t_2-t_1<2\delta\leq T$,
        causing a contradiction to the definition of injectivity radius of $L$. This shows (iii).

        Assume $L$ is an SR-foil or S-foil and $T<\infty$.
        For any $t_0\in (0,T]$ so that $L':=\exp_L(t_0,\text{NS}(L))$ is a DR-foil, there exists a sufficiently small positive number $\delta_0$ given in (iii),
        such that $\exp_L(t,\text{NS}(L))$ is a DR-foil for each $t\in (t_0-\delta_0,t_0+\delta_0)$ and $\exp_L$ is injective on $(t_0-\delta_0,t_0+\delta_0)\times \text{NS}(L)$.
        We claim $\exp_L:(0,t_0+\delta)\times \text{NS}(L)\to\mathcal{N}_{t_0+\delta}(L)\backslash L $ is injective with $\delta:=\min\{\delta_0,T\}$.
        Otherwise, we can find $t_1\in(0,t_0-\delta]$, $t_2\in[t_0,t_0+\delta)$, such that
        $\exp_L(r,\text{NS}(L))=\exp_L(r+t_2-t_1,\text{NS}(L))$ for each $r\in \mR$.
        Letting $r:=-\frac{t_2-t_1}{2}+\varepsilon$ and then using $\exp_L(-t,\text{NS}(L))=\exp_L(t,\text{NS}(L))$, we arrive at
        $$\exp_L\big(\frac{t_2-t_1}{2}-\varepsilon,\text{NS}(L)\big)= \exp_L\big(-\frac{t_2-t_1}{2}+\varepsilon,\text{NS}(L)\big)=\exp_L\big(\frac{t_2-t_1}{2}+\varepsilon,\text{NS}(L)\big),$$
        which causes a contradiction when $|\varepsilon|$ is sufficiently small.
        Therefore 
         $\exp_L(T,\text{NS}(L))$ should be an SR-foil or S-foil. (iv) has been proved.


         Finally, assume all foils in $\mathcal{F}$ are DR-foils and $T<\infty$,
         we will show $L_1:=\exp_L(T, B(L))$ coincides with $L_2:=\exp_L(-T, B(L))$.
         If not, due to the embeddedness of $L_1$ and $L_2$, Lemma \ref{lemma: basic facts in Bolton} enable us to find $\delta_0\in (0,T)$, such that the $\delta_0$-neighborhoods of $L_1$ and $L_2$ are disjoint.
         we claim $\exp_L$ is injective on $(-T-\delta_0,T+\delta_0)\times  B(L)$ and obtain a contradiction to the definition of $T$.
         To prove this claim, we assume by contradiction that $\exp_L(t_1,V_1)=\exp_L(t_2,V_2)$ for $V_1,V_2\in B(L)$, $(t_1,V_1)\neq (t_2,V_2)$ and $|t_1|\geq T$,
         then $\mathcal{N}_{\delta_0}(L_1)\cap \mathcal{N}_{\delta_0}(L_2)=\emptyset$ implies $t_1\in (-T-\delta_0,-T]$, $t_2\in (-T+\delta_0,T-\delta_0)$,
         or $t_1\in [T,T+\delta_0)$, $t_2\in (-T-\delta_0,T+\delta_0)$. For either case, $0<|t_2-t_1|<2T$, and it follows from Lemma \ref{non-inj} that $\exp_L(-\frac{t_2-t_1}{2},B(L))=\exp_L(\frac{t_2-t_1}{2},B(L))$,
         causing a contradiction to the definition of injectivity radius.

\end{proof}
\par
Assume first $L$ is an SR-foil or an S-foil in $\mathcal{F}$ whenever $N_\text{C}\geq 1$.
We shall consider the injectivity radius $T$ of $L$ case by case:
\begin{itemize}
\item {\bf Case A. } If $T=+\infty$, the normal exponential map becomes a diffeomorphism between $\text{N}(L)$ and $M$,
and then all other foils in $\F$ must be OR-foils (by (ii) of Lemma \ref{lemma: summary of basic}), i.e. $N_{\text{C}}=1$. Therefore
$$\mathcal F=\{L_t:t\geq 0\}\qquad \text{with }L_t:=\exp_L(t, \text{NS}(L))$$
and
$$d(L_{t_1},L_{t_2})=|t_2-t_1|.$$
This implies $D=+\infty$. Denote by
$$d_L(x):=d(x,L)$$
the distance function from $L$,
then $f(x):=d_L^2(x)$ is a smooth function on $M$, which satisfies
$$x\in L_t\Longleftrightarrow f(x)=t^2$$
and
$$|\nabla f|^2=4f.$$
Hence $f$ is a transnormal function on $M$, such that the transnormal system $\mathcal{F}_f$ coincides with $\mathcal{F}$.

\item {\bf Case B.}
 If $T<+\infty$, by (iv) of Lemma \ref{lemma: summary of basic},
$L':=\exp_L(T,\text{NS}(L))$ is another OR-foil or S-foil. The connectivity of the tubes around $L$ and $L'$ implies
$$\exp_L(t,\text{NS}(L))=\exp_L(-t,\text{NS}(L)),\quad \exp_{L'}(t,\text{NS}(L'))=\exp_{L'}(-t,\text{NS}(L')).$$
In conjunction with (i) of Lemma \ref{lemma: basic facts in Bolton}, we have
$$\exp_L(T+t,\text{NS}(L))=\exp_{L'}(t,\text{NS}(L'))=\exp_{L'}(-t,\text{NS}(L'))=\exp_L(T-t,\text{NS}(L))$$
for each $t\in [0,T]$
and hence $\exp_L(2T,\text{NS}(L))=L$, moreover
$$\exp_L(2kT+t,\text{NS}(L))=\exp_L(t,\text{NS}(L))$$
for any $k\in \Bbb{Z}$ and $t\in [0,2T)$.
Along with (ii) of Lemma \ref{lemma: basic facts in Bolton}, we know
$$\mathcal F=\{L_t:t\in [0,T]\}\qquad \text{with }L_t:=\exp_L(t, \text{NS}(L))$$
and
$$d(L_{t_1},L_{t_2})=|t_2-t_1|.$$
This implies $D=T$ and $N_\text{C}=2$. In this case, $M$ is the union of $\overline{\mathcal{N}_t(L)}$ and $\overline{\mathcal{N}_{T-t}(L')}$
glued together with their common boundary, where $t$ can be taken by an arbitrary number in $(0,T)$.
Since $d_L(x)+d_{L'}(x)=T$,
$$f(x):=\cos\left(\frac{\pi}{T}d_L(x)\right)=-\cos\left(\frac{\pi}{T}d_{L'}(x)\right)$$
should be a smooth function on $M$, which satisfies
$$x\in L_t\Longleftrightarrow f(x)=\cos\left(\frac{\pi t}{T}\right)$$
and
$$|\nabla f|^2=\left(\frac{\pi}{T}\right)^2(1-f^2).$$
Hence $f$ is a transnormal function on $M$, such that $\mathcal{F}_f=\mathcal{F}$.

\end{itemize}

    \par
    When $\mathcal{F}$ contains no OR-foil or S-foil, fix an arbitrary foil $L\in\mathcal{F}$, and assume its injectivity radius is $T$.
Now we discuss on $T$ case by case as following:
    \begin{itemize}
    \item {\bf Case C. } If $T=+\infty$,
    the definition of injectivity radius ensures
 the normal exponential map becomes a diffeomorphism between $\text{N}(L)\cong L\times \mR$ and $M$. It follows that
 $$\mathcal F=\{L_t:t\in \mR\}\qquad \text{with }L_t:=\exp_L(t, B(L))$$
 and
 $$d(L_{t_1},L_{t_2})=|t_2-t_1|.$$
 Thus $D=+\infty$. Denote by $\hat{d}_L(x)$ be the oriented distance function from $L$, satisfying
 $$\hat{d}_L(x)=t\Longleftrightarrow x\in L_t,$$
 then $\hat{d}_L$ is a smooth function on $M$, and
 $$|\nabla \hat{d}_L|=1,$$
i.e., $f:=\hat{d}_L$ is a transnormal function, such that $\mathcal{F}_f=\mathcal{F}$.

\item {\bf Case D. } If $T<+\infty$, then (v) of Lemma \ref{lemma: summary of basic} gives $\exp_L(T,B(L))=\exp_L(-T,B(L))$.
By Lemma \ref{non-inj}, we have
$$\exp_L(t,B(L))=\exp_L(t+2T,B(L))\qquad \forall\, t\in \mR.$$
Therefore
$$\mathcal F=\{L_{[t]}:[t]\in \mR/2T\Bbb{Z}\}\qquad \text{with }L_{[t]}:=\exp_L(t, B(L))$$
and
$$d(L_{[t_1]},L_{[t_2]})=d([t_1],[t_2]):=\min\{|t_2+2kT-t_1|:k\in \Bbb{Z}\},$$
which implies $D=T$. In this case,
$M$ is the union of $\overline{\mathcal{N}_t(L)}$ and $\overline{\mathcal{N}_{T-t}(L')}$
glued together with their common boundary, where $L':=\exp_L(T,B(L))$ and $t$ can be taken by an arbitrary number in $(0,T)$.
Let $\hat{d}_L$ and $\hat{d}_{L'}$ be the oriented distance function from $L$ and $L'$, respectively, such that
$$\aligned
\hat{d}_L(x)=&t\in (-T,T]\Longleftrightarrow x\in L_{[t]},\\
 \hat{d}_{L'}(x)=&s\in (-T,T]\Longleftrightarrow x\in L_{[T-s]}.
 \endaligned$$
Then
$$f(x):=\sin\left(\frac{\pi\hat{d}_L(x)}{T}\right)=\sin\left(\frac{\pi\hat{d}_{L’}(x)}{T}\right)$$
is a smooth function on $M$, which satisfies
$$f(x)=u\Longleftrightarrow x\in L_{[t]}\cup L_{[T-t]}\quad\text{ with }t:=\frac{T}{\pi}\arcsin u$$
and
$$|\nabla f|^2=\left(\frac{\pi}{T}\right)^2(1-f^2).$$
In other words, $f$ is a transnormal function whose corresponding transnormal system is just $\mathcal{F}$
(here each level set of $f$, except for those corresponding to $\pm 1$, consists of exactly 2 foils in $\mathcal{F}$).

\end{itemize}

This completes the proof of Theorem \ref{thm: intro: improving Bolton} and Theorem \ref{thm: intro: tran sys imply trans func and two structures}.

\par
Furthermore, we can classify embedded transnormal systems of codimension 1 into the following 7 types according to the number of SR-foils and S-foils, as well as the diameter of $\mathcal{F}$. See Table~\ref{table: seven types} and Figure~\ref{fig: seven types} for a summary of these types.  
\begin{table}
    \centering
    \caption{Properties of the seven types.}
    \label{table: seven types}
    \renewcommand\arraystretch{1.2}
    \begin{tabular}{|c|c|c|c|c|}
    \noalign{\hrule height 1pt}
    Type & $N_{\text{SR}}$ & $N_{\text{S}}$ & Diameter of $\mathcal{F}$ & Metric structure of $\mathcal{F}$ \\ 
    \hline
    {\it Cylindrical} & 0               & 0              &  $+\infty$                &    $\mR$             \\ 
    \hline
    {\it Planar}      & 0               & 1              &  $+\infty$                &    $[0,+\infty)$     \\ 
    \hline
    {\it Twisted-cylindrical} & 1       & 0              &  $+\infty$                &    $[0,+\infty)$     \\ 
    \hline
    {\it Toric}       &  0              & 0              &  $T<+\infty$              &    $\mR/2T\Bbb{Z}$   \\ 
    \hline
    {\it Spherical}   &  0              & 2              &  $T<+\infty$              &    $[0,T]$           \\ 
    \hline
    {\it Real-projective}   &  1        & 1              &  $T<+\infty$              &    $[0,T]$           \\ 
    \hline
    {\it Klein-bottled}   &  2          & 0              &  $T<+\infty$              &    $[0,T]$           \\ 
    \noalign{\hrule height 1pt}
    \end{tabular}
\end{table}
\begin{figure}[htbp]
    \centering
    \includegraphics[width=0.95\textwidth]{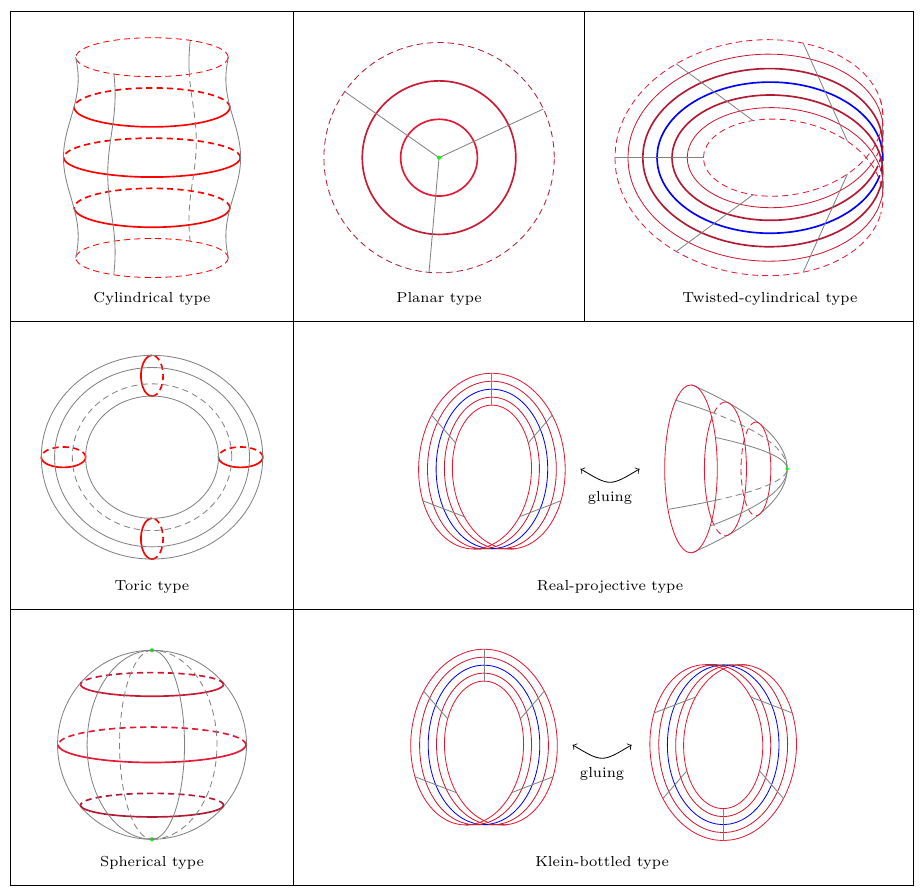}
    \caption{Illustrations of the seven types.}
    \label{fig: seven types}
    \vskip 0.2cm
    {\small Red indicates DR-foils, blue indicates SR-foils, green indicates S-foils, and gray represents normal geodesics.}
\end{figure}

\begin{rmk}\label{remark: omit}
When $\mathcal{F}$ is of Klein-bottled type, all foils are regular ones,
but the construction of the transnormal function is the same as in the spherical case.
Probably, this particular situation has been overlooked in some literature, e.g. \cite{MIYAOKA2013130}.

\end{rmk}

\section{The construction of transnormal and isoparametric functions}\label{sec: trans and isopara}

\par
The proof of Theorem \ref{thm: intro: two structures imply trans and isopara} relies on following lemmas concerning on the mean curvature
of regular level hypersurfaces of transnormal functions.
\par
\begin{lem}\label{lemma: mean curv and laplacian}
    Let $f$ be a transnormal function on a Riemannian manifold $M$ and $b$ be a smooth function, such that
    $|\nabla f|^2=b(f)$,
   then the Laplacian $\Delta f$ of $f$ on a regular level hypersurface $L$ is expressed as
    \[
        \Delta f=\frac12b'-H\cdot\sqrt{b},
    \]
    where $H$ denotes the mean curvature function on $L$ associated with the normal vector field $\frac{\nabla f}{|\nabla f|}$.
    Moreover, $f$ is isoparametric if and only if each regular level hypersurface (may be non-connected) has constant mean curvature.
\end{lem}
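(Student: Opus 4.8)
\emph{Plan of proof.}

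\emph{The divergence formula.} The formula for $\Delta f$ on a regular level hypersurface $L$ is a direct divergence computation, valid pointwise along $L$ (no completeness needed). Write $\nu:=\nabla f/|\nabla f|$ for the unit normal, which is defined on the regular set, an open neighbourhood of $L$, so that $\nabla f=\sqrt{b(f)}\,\nu$. The plan is to apply the Leibniz rule
\[
\Delta f=\operatorname{div}(\nabla f)=\operatorname{div}\!\big(\sqrt{b(f)}\,\nu\big)=\sqrt{b(f)}\,\operatorname{div}\nu+\big\langle\nabla\sqrt{b(f)},\nu\big\rangle
\]
and evaluate both terms on $L$. Since $\nabla\sqrt{b(f)}=\tfrac{b'(f)}{2\sqrt{b(f)}}\,\nabla f$ and $\langle\nabla f,\nu\rangle=|\nabla f|=\sqrt{b(f)}$, the second term equals $\tfrac12 b'(f)$. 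For the first term I would invoke the standard identity $\operatorname{div}\nu|_L=-H$, where $H$ is the mean curvature of $L$ with respect to $\nu$: choosing an orthonormal frame $e_1,\dots,e_{n-1}$ of $T_pL$ completed by $\nu$, the $\nu$-summand of $\operatorname{div}\nu$ vanishes because $|\nu|\equiv1$, while $\sum_i\langle\nabla_{e_i}\nu,e_i\rangle=-\sum_i\langle\nu,\nabla_{e_i}e_i\rangle=-H$ (this sign convention is exactly the one that makes the stated formula hold). As $b(f)$ and $b'(f)$ are constant on $L$, combining the two terms gives $\Delta f|_L=\tfrac12 b'-H\sqrt b$. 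It is worth noting, though unneeded here, that the integral curves of $\nu$ are geodesics, since $\nabla_{\nabla f}\nabla f=\tfrac12\nabla|\nabla f|^2=\tfrac12 b'(f)\nabla f$.

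\emph{The equivalence, easy direction.} If $f$ is isoparametric, say $\Delta f=a(f)$, then on each regular level hypersurface $L=f^{-1}(c)$ the formula gives $H=\big(\tfrac12 b'(c)-a(c)\big)/\sqrt{b(c)}$, a constant; so every regular level hypersurface has constant mean curvature.

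\emph{The equivalence, converse direction.} Conversely, assume every regular level set $f^{-1}(c)$ has constant mean curvature $H(c)$. On the open dense set $M_{\mathrm{reg}}$ of regular points $f$ is a submersion, so the smooth function $\Delta f$ — which the formula shows is constant on each level set — descends to a smooth function $a$ of $f$ on the interval of regular values, with $a(c)=\tfrac12 b'(c)-H(c)\sqrt{b(c)}$. The remaining, and only delicate, task is to extend $a$ smoothly across the (at most two) critical values of $f$ and to verify $\Delta f=a(f)$ on all of $M$. Here I would use Wang's regularity theorem: the critical set of $f$ is the union of the focal varieties $M_\pm$, each a closed submanifold, near which the level sets of $f$ are exactly the distance tubes. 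Along a unit-speed normal geodesic issuing from, say, $M_-=f^{-1}(\alpha)$ with $\alpha=\min f$, the function $u:=f-\alpha$ solves $(u')^2=b(\alpha+u)$ with $u(0)=u'(0)=0$; transnormality forces the Morse--Bott nondegeneracy $b'(\alpha)>0$, whence $u$ is a smooth even function of the arc-length parameter $t=\rho$, i.e.\ $f-\alpha=G(\rho^2)$ with $G$ smooth, $G(0)=0$, $G'(0)\neq0$, so $G$ is a local diffeomorphism at $0$. On the other hand $\Delta f$, being smooth and constant on the level sets of $f$, is constant on the distance tubes $\{\rho=\text{const}\}$ near $M_-$, hence equals $\chi(\rho^2)$ for a smooth $\chi$ (a smooth even function of $t$ is a smooth function of $t^2$). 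Therefore $\Delta f=\chi\big(G^{-1}(f-\alpha)\big)$ near $M_-$, so $c\mapsto\chi(G^{-1}(c-\alpha))$ is a smooth extension of $a$ past $\alpha$; the same argument applies at $M_+$. Since a smooth extension is unique, these extensions agree with $a$ and make it smooth on the whole range of $f$; then $\Delta f=a(f)$ holds on $M_{\mathrm{reg}}$ and on neighbourhoods of $M_\pm$, hence on all of $M$ by continuity and density.

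\emph{Main obstacle.} The divergence identity is routine; the one genuinely delicate point is the smoothness of $a$ at the (at most two) critical values of $f$ in the converse direction. This is handled by Wang's normal form $f=\text{const}+G(\rho^2)$ near a focal variety, which simultaneously reduces $\Delta f$ to a smooth function of $\rho^2$ and transfers that regularity back to a smooth dependence on $f$.
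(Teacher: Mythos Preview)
Your derivation of the formula $\Delta f=\tfrac12 b'-H\sqrt{b}$ is essentially the paper's: both split $\Delta f$ via an orthonormal frame adapted to the level hypersurface. The paper starts from $H=-\sum_i\langle\nabla_{E_i}\nu,E_i\rangle$, rewrites this as $-\frac{1}{|\nabla f|}(\Delta f-\langle\nabla_\nu\nabla f,\nu\rangle)$, and then uses $\langle\nabla_\nu\nabla f,\nu\rangle=\tfrac12\nu(|\nabla f|^2)=\tfrac12 b'$; you start from $\Delta f=\operatorname{div}(\sqrt{b}\,\nu)$ and apply the Leibniz rule. The algebraic content is identical.

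Where you go beyond the paper is in the ``Moreover'' clause. The paper stops after the formula and simply writes ``This completes the proof of the present lemma,'' treating the equivalence as immediate from the identity. You correctly isolate the one nontrivial point in the converse direction: if every regular level set has constant mean curvature, the formula gives $\Delta f=a(f)$ with $a$ smooth on the set of regular values, but one must still check that $a$ extends \emph{smoothly} across the critical values. Your argument---invoking Wang's tubular regularity to write $f-\alpha=G(\rho^2)$ near a focal variety, observing that $\Delta f$ is constant on tubes and hence an even smooth function of arc length along normal geodesics, and then pulling this back through $G^{-1}$---is correct and fills a gap the paper leaves open. One small caveat: Wang's theorem requires $M$ to be connected and complete, hypotheses the lemma as stated does not carry; since every application of the lemma in the paper occurs in that setting, nothing is lost, but your converse argument does use more than the lemma literally assumes.
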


\begin{proof}
Without loss of generality we can assume $L$ is connected, then $L$ is a DR-foil in $\mathcal{F}_f$,
$B(L):=\{\frac{\nabla f}{|\nabla f|}(x):x\in L\}$ is a connected component of $\text{NS}(L)$, and there exists $\varepsilon>0$,
such that $L_t:=\exp_L(t,B(L))$ is a DR-foil for each $t\in (-\varepsilon,\varepsilon)$.
For any $x_0\in L$, there exists a local orthonormal frame $\{E_1,\cdots,E_{n-1},\nu\}$ on
$B_\delta(x_0):=\{y\in M: d(x_0,y)<\delta\}$ with $\delta\leq \varepsilon$, where $\nu:=\frac{\nabla f}{|\nabla f|}$
is the unit normal vector on the foil $L_t$ passing through the considered point,
then the mean curvature $H$ of $L$ at $x_0$ can be calculated as follows.
    \begin{align*}
        &H=-\langle\nabla_{E_i}\nu,E_i\rangle
        = -\frac{1}{|\nabla f|}\langle\nabla_{E_i}\nabla f,E_i\rangle\\
        =& -\frac{1}{|\nabla f|}\left(\Delta f-\langle\nabla_{\nu}\nabla f,\nu\rangle\right)
        =-\frac{\Delta f}{|\nabla f|}+\frac{\nu(|\nabla f|^2)}{2|\nabla f|^2}\\
        =&-\frac{\Delta f}{\sqrt{b}}+\frac{b'}{2\sqrt{b}}.
    \end{align*}
    This completes the proof of the present lemma.
\end{proof}

\par
\begin{lem}\label{lemma: mean cur and vol}
Let $L$ be an oriented hypersurface of $M$, $\nu$ be a unit normal vector field on $L$
and
$$\phi_t(x):=\exp_L(t,\nu)(x)=\exp_x(t\nu(x)) \qquad (\forall t\in \mR, x\in L)$$
be the normal exponential map from $L$.
Denote by $dV_t$
the volume form of $L_t:=\{\phi_t(x):x\in L\}$
(assume $L_t$ is a hypersurface for each $t\in (-\varepsilon,\varepsilon)$), such that
$$\phi_t^*dV_t=\lambda_tdV_0$$
then
$$\frac{d}{dt}\log\lambda_t(x)=-H_t(\phi_t(x))\qquad \forall x\in L$$
with $H_t$ the mean curvature function on $L_t$.

\end{lem}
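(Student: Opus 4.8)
The plan is to compute the $t$-derivative of the volume form $dV_t$ along the normal flow and recognize the mean curvature as the logarithmic rate of change. First I would fix $x \in L$ and a local orthonormal frame $\{e_1,\dots,e_{n-1}\}$ of $T_xL$; extend these to vector fields tangent to $L$ near $x$, and push them forward by $\phi_t$ to get a frame $\{(\phi_t)_* e_1,\dots,(\phi_t)_* e_{n-1}\}$ of $T_{\phi_t(x)}L_t$. The key observation is that, since $\phi_t$ is the flow of the geodesic field $\nu$ (extended to the tubular neighbourhood as the unit normal field to the family $\{L_s\}$), the variation field $\partial_t \phi_t$ equals $\nu$ along $\phi_t(x)$, and each $J_i(t) := (\phi_t)_* e_i$ is a Jacobi-type field satisfying $\nabla_{\partial_t} J_i = \nabla_{J_i} \nu$ (equality of mixed partials / Lie bracket vanishing, as $[\partial_t,e_i]=0$ under the pushforward). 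Writing $\lambda_t(x)$ as the determinant of the matrix $G_{ij}(t) = \langle J_i(t), J_j(t)\rangle$ up to a square root, i.e. $\lambda_t(x)^2 = \det G(t)$ (after normalizing $G(0)=\mathrm{Id}$), I would differentiate.

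The main computation is then Jacobi's formula: $\frac{d}{dt}\log \det G(t) = \mathrm{tr}\big(G^{-1} \dot G\big)$, where $\dot G_{ij} = 2\langle \nabla_{\partial_t} J_i, J_j\rangle = 2\langle \nabla_{J_i}\nu, J_j\rangle$ (using that $\nu$ is parallel along the normal geodesics, so the $\partial_t$-derivative hits only the $J$'s, and symmetry of the shape operator makes the symmetrization automatic). Evaluating at a point where one may arrange $G(t_0) = \mathrm{Id}$ by choosing the frame orthonormal at that instant, this gives $\frac{d}{dt}\log\lambda_t = \frac12 \mathrm{tr}(\dot G) = \sum_i \langle \nabla_{e_i^t}\nu, e_i^t\rangle$ with $\{e_i^t\}$ orthonormal in $T_{\phi_t(x)}L_t$. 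Since the paper's sign convention (visible in the proof of Lemma \ref{lemma: mean curv and laplacian}) defines $H_t = -\langle \nabla_{E_i}\nu, E_i\rangle$ summed over an orthonormal tangent frame, this is exactly $-H_t(\phi_t(x))$, which is the claim.

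The one genuine subtlety — the step I expect to be the main obstacle — is justifying that the field $\nu$ appearing in $\nabla_{J_i}\nu$ is a legitimate vector field on a neighbourhood (not just along a single geodesic), so that the identity $\nabla_{\partial_t} J_i = \nabla_{J_i}\nu$ and the interpretation of $\langle \nabla_{e_i^t}\nu, e_i^t\rangle$ as the second fundamental form of $L_t$ both make sense. This is handled by the hypothesis that $L_t$ is a hypersurface for all $t \in (-\varepsilon,\varepsilon)$: the map $(t,x)\mapsto \phi_t(x)$ is then a local diffeomorphism for small $|t|$ onto a tubular neighbourhood, $\nu$ extends to a smooth unit vector field there (normal to every $L_t$ and tangent to the normal geodesics), and $\nabla_{J_i}\nu$ restricted to $L_t$ is precisely $-S_t(J_i)$ with $S_t$ the shape operator. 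One should also note that the pushed-forward frame $\{J_i(t)\}$ stays a basis of $T_{\phi_t(x)}L_t$ exactly under this hypersurface assumption, which is what makes $\lambda_t(x) \neq 0$ and $\log\lambda_t$ well-defined. Everything else is the routine Jacobi-formula calculation sketched above.
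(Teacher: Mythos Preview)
Your proposal is correct and follows essentially the same route as the paper: push forward an orthonormal frame by $\phi_t$, express $\lambda_t$ as the square root of the determinant of the Gram matrix $G_{ij}(t)=\langle(\phi_t)_*e_i,(\phi_t)_*e_j\rangle$, differentiate, and recognize the trace as $-H_t$ via the paper's convention $H=-\langle\nabla_{E_i}\nu,E_i\rangle$. The only cosmetic difference is that the paper computes the derivative at $t=0$ and then obtains the general-$t$ statement by the shift $\psi_s:=\phi_{t_0+s}\circ\phi_{t_0}^{-1}$ (so that $\psi_s^*dV_{t_0+s}=(\lambda_{t_0+s}/\lambda_{t_0})\circ\phi_{t_0}^{-1}\,dV_{t_0}$), whereas you carry the $G^{-1}$ through Jacobi's formula and orthonormalize at $t_0$; these are equivalent.
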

\begin{rmk}
As a corollary, each $L_t$ has constant mean curvature, if and only if for each $t$,
$\phi_t:L\rightarrow L_t$ is a volume-preserving diffeomorphism up to a rescaling,
i.e. $\phi_t^*dV_t=\lambda_tdV_0$ with $\lambda_t$ only depending on $t$.
Furthermore, $\lambda_t\equiv 1$ for each $t$ ensures all of $L_t$ are minimal hypersurfaces.
If $L$ and $L_t$ are non-orientable, the volume forms $dV_0$ and $dV_t$ are understood in a local sense, i.e., as well-defined volume forms on a neighborhood of each considered point, and the same convention applies throughout the following discussion whenever a volume form on a non-orientable hypersurface is involved.
\end{rmk}

\begin{proof}
For each consider point $x\in L$, let $\{e_1,\cdots,e_{n-1}\}$ be an orthonormal basis of $T_x L$
and
$$g_{ij}(t):=\langle (\phi_t)_*e_i,(\phi_t)_*e_j\rangle,$$
then
$$\aligned
\lambda_t=&\lambda_t dV_0(e_1,\cdots,e_{n-1})
=\phi_t^*dV_t(e_1,\cdots,e_{n-1})\\
=&dV_t((\phi_t)_*e_1,\cdots,(\phi_t)_*e_{n-1})
=\det(g_{ij}(t))^{\frac{1}{2}}
\endaligned$$
and
$$\aligned
\frac{d}{dt}\Big|_{t=0}\lambda_t=&\frac{d}{dt}\Big|_{t=0}\det(g_{ij}(t))^{\frac{1}{2}}=\frac{1}{2}\frac{d}{dt}\Big|_{t=0}\det(g_{ij}(t))\\
=&\frac{1}{2}g'_{ii}(0)=\frac{1}{2}\nabla_{\frac{\partial}{\partial t}}\langle (\phi_t)_* e_i,(\phi_t)_* e_i\rangle\Big|_{t=0}\\
=&\langle \nabla_{e_i}\nu,e_i\rangle=-H_0(x).
\endaligned$$
For any $t_0\in (-\varepsilon,\varepsilon)$, denote by $\psi_s$ the normal exponential map from $L_{t_0}$,
then $\psi_s=\phi_{t_0+s}\circ \phi_{t_0}^{-1}$ and hence
$$\psi_s^*dV_{t_0+s}=\left(\frac{\lambda_{t_0+s}}{\lambda_{t_0}}\circ \phi_{t_0}^{-1}\right)dV_{t_0}.$$
Similarly as above, we have
$$H_{t_0}(\phi_{t_0}(x))=-\frac{d}{ds}\Big|_{s=0}\frac{\lambda_{t_0+s}}{\lambda_{t_0}}(x)=-\frac{d}{dt}\Big|_{t=t_0}\log \lambda_t(x).$$
\end{proof}
\par
Let $M$ be a vector bundle over $N$, $\pi:M\rightarrow N$ be the bundle projection.
Using local trivializations and a partition of unity, we can construct a {\it bundle metric} (see e.g. \cite[Theorem 2.1.4]{jost2008riemannian})
$\langle\cdot,\cdot\rangle$ and a linear connection $\overline{\nabla}$ compatible with $\langle\cdot,\cdot\rangle$. Namely,
let $s_1,s_2$ be smooth sections on an open domain $U$ of $N$ (that is, $s_i$ is a smooth mapping from $U$ into $M$ so that $\pi\circ s_i=\mathbf{Id}_U$),
$X$ and $f$ are tangent vector field and smooth function
on $U$, respectively, then
  \[
            \overline{\nabla}_X(s_1+f\,s_2)=\overline{\nabla}_X s_1+f\,\overline{\nabla}_X s_2+(Xf)s_2
        \]
and
 \[
            X\langle s_1,s_2\rangle=\langle \overline{\nabla}_X s_1,s_2\rangle+\langle s_1,\overline{\nabla}_X s_2\rangle.
        \]
For any smooth curve $\gamma: [a,b]\rightarrow N$ and $u\in \pi^{-1}(\gamma(a))$, there exists a unique
smooth curve $\xi:[a,b]\rightarrow M$, such that $\pi\circ \xi(t)=\gamma(t)$, $\xi(a)=u$
and $\overline{\nabla}_{\dot{\gamma}}\xi=0$. $\xi$ is said to be the {\it horizontal lift} of $\gamma$,
and $\dot{\xi}(a)$ is called a {horizontal vector} at $u$. All horizontal vectors form the {\it horizontal space} $\mathcal{H}_u$,
such that $\pi_*:\mathcal{H}_u\rightarrow T_{\pi(u)}N$ is a linear isomorphism.
It follows that
$$T_u M=\mathcal{H}_u\oplus \mathcal{V}_u$$
with $\mathcal{V}_u:=\{V\in T_u M:\pi_* V=0\}$ denoting the {\it vertical space} at $u$, i.e.
the tangent space of the fiber passing through $u$. Then
\begin{equation*}\label{metric-vb}
g_M(V_1,V_2):=g_N(\pi_* V_1,\pi_* V_2)+\langle V_1^\mathcal{V},V_2^\mathcal{V}\rangle
\end{equation*}
is an inner product on $T_u M$, where $V^{\mathcal{V}}_i$ is the vertical component of $V_i$ and $g_N$ is a fixed Riemannian metric on $N$.
Obviously $g_M$ is a Riemannian metric on $M$. Denote
$$f(u):=\langle u,u\rangle,$$
then $f$ is a smooth function on $M$. Let $\gamma:[a,b]\rightarrow N$ be a shortest geodesic on $(N,g_N)$,
$\xi:[a,b]\rightarrow M$ be the horizontal lift of $\gamma$ and $\eta:[a,b]\rightarrow M$ be another curve from
$\xi(a)$ to $\xi(b)$, then
$$\aligned
L(\eta)=&\int_a^b \sqrt{g_M(\dot{\eta},\dot{\eta})}dt=\int_a^b \sqrt{g_N(\pi_*\dot{\eta},\pi_* \dot{\eta})+\langle \dot{\eta}^\mathcal{V},\dot{\eta}^\mathcal{V}\rangle}\\
\geq& \int_a^b \sqrt{g_N((\pi\circ \eta)',(\pi\circ \eta)')}=L(\pi\circ \eta)\geq L(\gamma)
\endaligned$$
and the equality hold if and only if $\eta=\xi$ (up to a parameter transformation). This implies $\xi$ is a geodesic on $M$, and
$$\frac{d}{dt}\langle \xi(t),\xi(t)\rangle=2\langle \overline{\nabla}_{\dot{\gamma}}\xi,\xi\rangle=0.$$
Due to the arbitraries of $\gamma$, we have
$$\nabla f(W)=0,\qquad \text{Hess } f(W,W)=0$$
for any $W\in \mathcal{H}_u$. On the other hand, for each $V\in \mathcal{V}_u$, it is easily seen that
$t\in \mR \mapsto u+tV$ is a geodesic in $M$, hence
$$\nabla f(V)=\frac{d}{dt}\Big|_{t=0}\langle u+tV,u+tV\rangle=2\langle u,V\rangle$$
and
$$\text{Hess }f(V,V)=\frac{d^2}{dt^2}\Big|_{t=0}\langle u+tV,u+tV\rangle=2\langle V,V\rangle.$$
Therefore
$$|\nabla f|^2=4f,\qquad \Delta f=2k$$
with $k$ the rank of the vector bundle, i.e. the dimension of the vector fibers.
This means $f$ is not only transnormal, but also isoparametric.
For each $r\in (0,+\infty)$, let
$$\mathcal{S}(r):=\{u\in M:|u|=r\}$$
(where $|u|:=\langle u,u\rangle^{\frac{1}{2}}$) be the regular level hypersurface, then
$\mathcal{S}(r)$ is a subbundle of $M$ with hypersphere fibers, 
and for each $v\in \mathcal{S}(1)$,
$r\in \mR\mapsto rv$ is a arc-length parameterized geodesic which is orthogonal to all level sets.
Thus $\psi:(0,+\infty)\times \mathcal{S}(1)\rightarrow M\backslash N:=\{u\in M:\langle u,u\rangle>0\}$
$$(r,v)\mapsto rv$$
is a diffeomorphism, and the pull-back metric on $(0,+\infty)\times \mathcal{S}(1)$ can be written as
$$\psi^* g_M=dr^2+h(r).$$
On the other hand, due to Lemma \ref{lemma: mean curv and laplacian}, the mean curvature
$$H_r\equiv -\frac{k-1}{r}$$
on $\mathcal{S}(r)$ associated with the normal vectors pointing in the direction of the increasing of $f$.

\par
Now we consider a manifold $M$ admitting a LDDBD. Namely, there exist two vector bundles $M_1,M_2$ over $N_1,N_2$, equipped with
 bundle metrics $\langle\cdot,\cdot\rangle_1$ and $\langle\cdot,\cdot\rangle_2$, respectively; let
$$\mathcal{D}_i:=\{u_i\in M_i:|u_i|_i\leq 1\}\quad (i=1,2)$$
be the linear disk bundle, whose boundary $\partial \mathcal{D}_i$ is the sphere bundle;
then $M$ is the union of $\mathcal{D}_1$ and $\mathcal{D}_2$ glued along their boundary by a diffeomorphism $\phi:\partial \mathcal{D}_1\rightarrow \partial \mathcal{D}_2$.
Therefore,
$$M=W_1\cup W_2\cup W,$$
where
$$W_i:=\left\{u_i\in M_i:|u_i|_i<\frac{2}{3}\right\}\qquad (i=1,2)$$
and
$$W:=M\backslash \left(\left\{u_1\in M_1:|u_1|_1\leq \frac{1}{3}\right\}\cup \left\{u_2\in M_2:|u_2|_2<\frac{1}{3}\right\} \right)$$
are all open domains of $M$, and
$\psi:(-\frac{2}{3},\frac{2}{3})\times \partial \mathcal{D}_1\rightarrow W$
$$(r,v_1)\mapsto \begin{cases}
(r+1)v_1\in \mathcal{D}_1 & \text{when }r\leq 0\\
(1-r)\phi(v_1)\in \mathcal{D}_2 & \text{when }r>0
\end{cases}$$
is a diffeomorphism.

As shown above, there exists a Riemannian metric $g_i$ on $M_i$ ($i=1,2$), such that the squared norm function $f_i(u_i):=\langle u_i,u_i\rangle_i$
becomes a transnormal function, and
$$\psi_i^*g_i=dr_i^2+h_i(r_i)$$
with $\psi_i:(r_i,v_i)\mapsto r_iv_i$ the diffeomorphism between $(0,+\infty)\times \partial \mathcal{D}_i$ and $M_i\backslash N_i$
for $i=1,2$.
Now we define a Riemannian metric $g$ on $M$, such that
$$g|_{W_i}=g_i\qquad \forall i=1,2$$
and
$$\psi^*g|_W=dr^2+h(r),$$
where
$$h(r):=F(r)h_1(r+1)+(1-F(r))\phi^*h_2(1-r)$$
with $F(r)$ a smooth decreasing function on $(-\frac{2}{3},\frac{2}{3})$ satisfying
$$F|_{(-\frac{2}{3},-\frac{1}{6})}\equiv 1,\ F|_{(\frac{1}{6},\frac{2}{3})}\equiv 0.$$
(It is easy to check that $\psi^*g|_{W\cap W_i}=\psi^*g_i|_{W\cap W_i}$, hence $g$ is well-defined.)
Equipped with the above Riemannian metric,
$\psi(r,v_1)\in W\mapsto r$ is obviously a transnormal function, the length of whose gradient is $1$ everywhere.
Hence
$$\mathcal{F}:=\{L_1\in \mathcal{F}_{f_1}:L_1\subset \mathcal{D}_1\}\cup \{L_2\in \mathcal{F}_{f_2}:L_2\subset \mathcal{D}_2\}$$
(here $L_1$ and $\phi(L_1)$ are seen as the same foil whenever $L_1\subset \partial\mathcal{D}_1$)
is an embedded transnormal system of codimension 1 on $(M,g)$, and then Theorem \ref{thm: intro: tran sys imply trans func and two structures}
implies the existence of a transnormal function $f$ on $(M,g)$.

Next, under appropriate compactness conditions, we can refine this Riemannian metric so that
$f$ becomes an isoparametric function.

Now we assume both $N_1$ and $N_2$ are compact manifolds,
then $\partial \mathcal{D}_1$ and $\partial \mathcal{D}_2$ are also compact ones.
Due to the Moser's trick (see \cite{moser1965volume,dacorogna1990partial}),
we can find a diffeomorphism $\chi:\partial\mathcal{D}_1\rightarrow \partial\mathcal{D}_1$, such that
\begin{itemize}
\item $\chi$ is isotopic to the identity, i.e. there exist a 1-parameter family of diffeomorphisms $\{\chi_t: \partial\mathcal{D}_1\rightarrow \partial\mathcal{D}_1\}$
smoothly depending on $t$,
such that $\chi_0=\mathbf{Id}_{\partial\mathcal{D}_1}$, $\chi_1=\chi$;
\item $\chi:(\partial\mathcal{D}_1,h_1(1))\rightarrow (\partial\mathcal{D}_1,\phi^*h_2(1))$ is a volume-preserving map.
 i.e. $dV_1=\chi^*(\phi^*dV_2)$, where $dV_i$ is the volume form on $(U_i,h_i(1))$,
 $U_1$ is a neighborhood around each given $x\in \partial\mathcal{D}_1$ and $U_2:=(\phi\circ \chi)(U_1)$.
\end{itemize}
Thereby, $\hat{\phi}:=\phi\circ \chi:(\partial\mathcal{D}_1,h_1(1))\rightarrow (\partial\mathcal{D}_2,h_2(1))$
is a volume-preserving diffeomorphism that is isotopic to $\phi$. The attached mapping $\hat{\phi}$
can also result in a smooth manifold, denoted by $\hat{M}$. Using the isotopy between $\phi$ and $\hat{\phi}$,
it is easy for us to construct a diffeomorphism between $M$ and $\hat{M}$ that preserves the subbundles with hypersphere fibers.
Therefore, without loss of generality, we can assume $\hat{M}=M$, i.e. $\phi$ is a volume-preserving diffeomorphism.

Denote by
$$\mathcal{S}_i(r):=\{u_i\in M_i:|u_i|_i=r\}\qquad (\forall r\in (0,+\infty))$$
the sphere subbundle of $M_i$ ($i=1,2$),
then $\phi_{i,t}:\partial \mathcal{D}_i\rightarrow \mathcal{S}_i(1+t)$
$$v_i\mapsto \psi_i(1+t,v_i)$$
is the normal exponential map from $\partial\mathcal{D}_i$.
By Lemma \ref{lemma: mean cur and vol}, a straightforward calculation shows
$$dV_i(r_i)=\phi_{i,r_i-1}^*dV_i=r_i^{k_i-1}dV_i.$$
Here $k_i$ is the rank of $M_i$ and $dV_i(r_i)$ denotes the volume form of $(\partial\mathcal{D}_i,h_i(r_i))$. 
Let $dV(r)$ be the volume form of $(\partial\mathcal{D}_1, h(r))$,
then
$$dV(r)=dV_1(r+1)=(r+1)^{k_1-1}dV_1\qquad \forall r\in (-\frac{2}{3},-\frac{1}{6})$$
and
$$dV(r)=\phi^*dV_2(1-r)=(1-r)^{k_2-1}\phi^*dV_2=(1-r)^{k_2-1}dV_1\qquad \forall r\in (\frac{1}{6},\frac{2}{3}).$$
Now we fix a positive smooth function $\Phi(r)$ on $(-\frac{2}{3},\frac{2}{3})$,
such that
$$\Phi(r)=\begin{cases}
(r+1)^{k_1-1} & \text{when }r\in (-\frac{2}{3},-\frac{1}{6}),\\
(1-r)^{k_2-1} & \text{when }r\in (\frac{1}{6},\frac{2}{3}).
\end{cases}$$
then there exists a positive function $\mu(r,x)$, such that
$$dV(r)=\mu^{-(n-1)}(r,x)\Phi(r)dV_1.$$
Let $\tilde{g}$ be a new Riemannian metric on $M$, such that $\tilde{g}|_{W_i}=g_i|_{W_i}$ for $i=1,2$
and
$$\psi^*\tilde{g}|_W=dr^2+\mu^2(r,x)h(r).$$
Since
$$d\tilde{V}(r)=\Phi(r)dV_1$$
with $d\tilde{V}(r)$ the volume form associated with $(\partial{\mathcal{D}}_1,\mu^2(r,x)h(r))$,
again applying Lemma \ref{lemma: mean cur and vol}, we see each DR-foil in the transnormal
system $\mathcal{F}$ has constant mean curvature. Particularly if $k_1=k_2=1$, i.e.
all foils in $\mathcal{F}$ are regular, we can take $\Phi(r)\equiv 1$ so that
each foil is a minimal hypersurface. Finally Lemma \ref{lemma: mean curv and laplacian} ensures
$f$ is an isoparametric function on $(M,\tilde{g})$. This completes the proof of Theorem \ref{thm: intro: two structures imply trans and isopara}.
Combining Theorem \ref{thm: intro: two structures imply trans and isopara}, Wang's regularity theorem \cite{isoparaWang1987} and
Theorem \ref{thm: intro: tran sys imply trans func and two structures},  we can immediately derive Corollary \ref{coro: intro: trans to isopara}
and Corollary \ref{coro: intro: compact}.

\par
\begin{rmk}\label{rmk: trans never isopara}
    Sometimes, the transnormal function $f$ in Corollary \ref{coro: intro: trans to isopara} will never become isoparametric under any Riemannian metric. For instance, the radial
     $$f:=\cos r=\cos\left(\sqrt{x_1^2+\cdots+x_n^2}\right)$$ 
     on the Euclidean space $\mR^n$ $(n\ge 2)$ is a transnormal function that induces a transnormal system of planar type. 
     Assume $f$ is an isoparametric function on $(\mR^n,g)$, then Lemma \ref{lemma: mean curv and laplacian} implies
     each $S_r:=\{(x_1,\cdots,x_n)\in \mR^n:\sqrt{x_1^2+\cdots+x_n^2}=r\}$ with $r>0$ has constant mean curvature $H_r$,
     and $H_{r_1}=H_{r_2}$ whenever $\cos(r_1)=\cos(r_2)$. Thus
     $$H_{2\pi}=\lim_{t\rightarrow 0^+}H_{2\pi+t}=\lim_{t\rightarrow 0^+}H_{t}=+\infty$$
     and causes a contradiction.
     In contrast, another radial function $\tilde{f}:=r^2$ is an isoparametric function, and induces the same transnormal system as $f$. Moreover, if each level set of $f$ is connected, then we can take the function $\tilde{f}$ in Corollary \ref{coro: intro: trans to isopara} to be $f$.
\end{rmk}
\begin{rmk}
    Isoparametric functions on certain compact simply connected manifolds have been extensively studied in \cite{ge2013isoparametric,ge2015differentiable,qian2015isoparametric}. In these cases, attention can be restricted to the so-called {\it properly isoparametric} functions, which are defined in \cite{ge2013isoparametric} as isoparametric functions with focal varieties of codimension greater than one. A notable property of properly isoparametric functions is that each level set is connected, which significantly simplifies the discussion of the structure of the underlying manifold. However, this property does not hold in general; indeed, for any transnormal system $\F$ of toric type, 
    since the metric space $(\F,d)$ is homeomorphic to $\mS^1$, the transnormal function inducing $\F$ also induces a continuous function from $\mS^1$ to $\mR$, which cannot be injective;
    consequently, some level sets of such functions has to be disconnected. In fact, the absence of this significant property partially explains why, in the general case, 
    considering the associated transnormal system can be more effective than directly considering the transnormal function as in the compact simply connected case.

\end{rmk}

\end{document}